\numberwithin{equation}{section}
\newcommand{\R}{\mathbb{R}}
\newcommand{\eps}{\varepsilon}
\newcommand{\Laplace}{\Delta}
\DeclareMathOperator*{\argmin}{arg\,min}
\newcommand{\rd}{\mathrm{d}}
\newcommand{\abs}[1]{\left| #1 \right|}
\newcommand{\norm}[1]{\| #1 \|}
\newtheorem{theorem}{Theorem}[section]
\newtheorem{problem}[theorem]{Problem}
\newtheorem{proposition}[theorem]{Proposition}
\newtheorem{corollary}[theorem]{Corollary}
\newtheorem{remark}[theorem]{Remark}
\newtheorem{example}[theorem]{Example}
\newcommand\blfootnote[1]{%
  \begingroup
  \renewcommand\thefootnote{}\footnote{#1}%
  \addtocounter{footnote}{-1}%
  \endgroup
}
\title{Phase field methods for binary recovery }
\author{C. Brett \thanks{Mathematics Institute, University of Warwick, Coventry, CV4 7AL, UK (E-mail: {\tt c.brett@warwick.ac.uk}, {\tt a.s.dedner@warwick.ac.uk}, {\tt c.m.elliott@warwick.ac.uk})} \and A. S. Dedner\footnotemark[1] \and C. M. Elliott\footnotemark[1]}
\date{}
\begin{document}

\maketitle

\blfootnote{This work was supported by the UK Engineering and Physical Sciences Research Council (EPSRC) Grant EP/H023364/1.}

\begin{abstract}
We consider the inverse problem of recovering a binary function from blurred and noisy data. Such problems arise in many applications, for example image processing and optimal control of PDEs. Our formulation is based on the Mumford-Shah model, but with a phase field approximation to the perimeter regularisation. We use a double obstacle potential as well as a smooth double well potential. We introduce an iterative method for solving the problem, develop a suitable discretisation of this iterative method, and prove some convergence results. Numerical simulations are presented which illustrate the usefulness of the approach and the  relative merits of the phase field models.
\end{abstract}

\section{Introduction}

\label{sec:one}

A fundamental problem in the field of image processing is the following. Suppose we have a function $\bar{u}$ defined on a bounded and piecewise smooth domain $\Omega \subset \R^N$ for $N \leq 3$, which has been transformed by a linear operator $S$, and then corrupted by additive noise $\zeta$, such that we have data
\[
 y_d := S \bar{u} + \zeta.
\]
The problem is to recover $\bar{u}$ given $y_d$. Two immediate issues are that (a) $\zeta$ is unknown, so we will not be able to find $\bar{u}$ even with a good model for the space in which it lies (b) inverting $S$ may be ill-posed, so it will be difficult to find an approximation to $\bar{u}$ even if $\zeta=0$. 

We investigate this problem in the case that $\bar{u}$ is a binary function. We develop the theory with $S$ an abstract operator, but in examples we take $S$ to be the solution operator of an elliptic PDE. In this case the problem becomes one in PDE constrained optimal control.

Our approach to modelling the problem is to minimise an energy functional consisting of an $L^2$ fidelity term plus a phase field approximation to minimal perimeter regularisation. This can be thought of as a relaxation of the Mumford-Shah segmentation model. In our phase field approximation we use the Ginzburg-Landau functional with both the smooth double well and double obstacle potentials.

% We develop iterative methods for both potentials which we can justify with analytic results. We finish by performing numerical simulations in the Distributed and Unified Numerics Environment (DUNE). This allows a detailed comparison of the algorithms resulting from each potential.

\subsection{Motivating examples}

\label{sec:mot}

First we give examples from both image processing and optimal control of PDEs motivating the study of this problem:
\begin{itemize}
\item \textbf{Image segmentation} - We can represent a barcode by a 1D function which takes the value -1 when the barcode is white and 1 when it is black. When a barcode is scanned by a barcode reader this function becomes blurred (due to scattering in the air) and noisy (due to measurement error and imperfections in the barcode). So the machine only sees a corrupted signal, but from this it needs to determine the scanned barcode.
\item \textbf{Elliptic source recovery} - Suppose we have noisy data of a quantity $y$, which is related to another quantity $\bar{u}$ by some physical law. For example, let $\bar{u}$ represent a heat source, then the long term temperature distribution $y$ may be related to $\bar{u}$ by the solution of an elliptic PDE. Our goal could be to find the heat source that produces a particular temperature distribution.
\end{itemize}

\subsection{Background material}

\label{sec:background}

For the above problems to be tractable we naturally require some knowledge of the form of the operator $S$ and the noise $\zeta$. We also usually assume a specific form of $\bar{u}$, as this influences the best model to use. For example, in the barcode problem we could assume that the function we are trying to recover is a binary function taking the values -1 and 1, and that the bars have a minimum width. Some sets of assumptions on $S$, $\zeta$ and $\bar{u}$ that are made in the literature are the following:
\begin{enumerate}
\item \emph{Denoising and deblurring} - $S$ is a blurring operator (maybe the identity), $\zeta$ is Gaussian noise, and $\bar{u}$ is a piecewise smooth function  (\cite{Rudin-Osher-Fatemi}, \cite{Chambolle-Lions}, \cite{Chan-Esedoglu}).
\item \emph{Segmentation} - $S$ is a blurring operator (maybe the identity), $\zeta$ is Gaussian noise, and $\bar{u}$ is binary function (\cite{Mumford}, \cite{Esedoglu2004}, \cite{Choksi2010}). These are the assumptions we make in this work.
\item \emph{Binary image restoration} - $S$ is the identity, we have `salt and pepper' noise, and $\bar{u}$ is a binary function (\cite{Chan-Esedoglu-Nikolova}). This kind of noise gives each point of a binary function a probability of switching to the other value, so the data $y_d$ is also binary.
\end{enumerate}

Note that the above sets of assumptions have been named using terminology from image processing. Although our problem can be thought of as either an image processing or PDE constrained optimal control problem depending on the choice of $S$, we found most of the relevant literature to be from the image processing community. This is unsurprising since image processing is one of the main  applications of binary recovery. We end up taking $S$ to be the solution operator of an elliptic PDE, but try to use neutral language which reflects that our problem arises in these two fields.  

For segmentation, which we focus on in this work, a large proportion of the literature modifies one of the following two models when formulating the problem of Section \ref{sec:one} mathematically. We now introduce these models so the reader can see how our approach fits with the existing literature.

\begin{itemize}
\item \textbf{Model 1 (Mumford-Shah).}
This model, which was introduced in \cite{Mumford}, looks for piecewise smooth functions that minimise an energy functional.

Let $\Omega_i$ be disjoint open subsets with piecewise smooth boundaries such that the closure of $\bigcup \Omega_i$ is $\Omega$. Let $u$ be a function that is differentiable on $\bigcup \Omega_i$, but which is allowed to be discontinuous across $\Gamma:= \bigcup \partial \Omega_i \setminus \partial \Omega$. Then the Mumford-Shah model involves minimising
\begin{equation}
\label{eqn:mum1}
E_1(u,\Gamma) = \frac{1}{2} \int_\Omega (u-y_d)^2 + \mu \int_{\Omega \setminus \Gamma} \abs{\nabla u}^2 + \sigma \abs{\Gamma},
\end{equation}
where $\abs{\Gamma}$ denotes the $N-1$ dimensional Hausdorff measure of $\Gamma$. The $\abs{\Gamma}$ term encourages minimising the length of the interface over which $u$ is discontinuous. % and is equivalent to perimeter regularisation, which we discuss shortly.

If we restrict to minimising over binary functions that take the unknown value $a_i$ on $\Omega_i$ ($i=0,1$), then this energy functional becomes
\begin{equation*}
E_2(\{a_i\},\Gamma) = \frac{1}{2} \sum_i \int_{\Omega_i} (a_i-y_d)^2 + \sigma \abs{\Gamma}.
\end{equation*}
For fixed $\Gamma$ note that $E_2$ is minimised with respect to $\{a_i\}$ by setting
\[
a_i= \frac{1}{\abs{\Omega_i}} \int_{\Omega_i} y_d.
\]
So the problem reduces to just finding $\Gamma$, the locations of the discontinuities.

Due to the spaces of functions we are minimising over, both of the above variants of the Mumford-Shah model are nonconvex problems. In our work will use a relaxation of (\ref{eqn:mum1}) based on a phase field approximation.

\item \textbf{Model 2 (ROF).}
The ROF (Rudin-Osher-Fatemi) model of \cite{Rudin-Osher-Fatemi} involves solving the following constrained minimisation problem over a suitable space of functions:
\begin{align}
 &\text{Minimise} \abs{u}_{TV} \nonumber \\ 
 &\text{with} \int_\Omega u = \int_\Omega y_d \text{ and } \int_\Omega (u-y_d)^2 = s^2. \label{eqn:constraints}
\end{align}
The term $\abs{u}_{TV}$ represents the total variation of $u$, and it can be defined even if $u$ is not continuous; the total variation of a function $u \in L^1(\Omega)$ is
\[
\abs{u}_{TV} := \sup \{-\int_\Omega u \mathrm{div}(\phi) \, \mathrm{d}x : \phi \in C^1_c(\Omega,\R^N), \norm{\phi}_{L^\infty(\Omega)} \leq 1 \}.
\]
Sometimes the notation $\int_\Omega \abs{\nabla u}$ is used instead of $\abs{u}_{TV}$ to highlight that the total variation of $u$ is equal to this quantity when it is well defined. The first constraint in (\ref{eqn:constraints}) says that the noise has zero mean and the second that it has standard deviation $s$. 

$BV(\Omega, \R)$ is the subspace of functions in $L^1(\Omega)$ which have finite total variation. Minimising this model over $u \in BV(\Omega, \R)$ can be related to the following problem for some value of $\sigma$:
\begin{align}
\text{Minimise} \quad \frac{1}{2} \norm{u-y_d}_{L^2(\Omega)}^2 + \sigma \abs{u}_{TV} \text{ over } BV(\Omega,\R). \label{eqn:rof2}
\end{align}
Note that (\ref{eqn:rof2}) can be thought of as a relaxation of (\ref{eqn:mum1}) with $\mu=0$; we minimise over a larger space of functions in order to get a convex problem. 

If we restrict to minimising over binary functions then (\ref{eqn:rof2}) becomes similar to the Mumford-Shah model. Suppose $u$ only takes the known values $a_0<a_1$ (i.e.\ $u \in BV(\Omega, \{a_0, a_1\})$), then
\[
\abs{u}_{TV} = (a_1-a_0)\mathrm{Per}(\{u=a_1\}) = (a_1-a_0)\abs{\Gamma},
\]
where the perimeter function $\mathrm{Per}(\Sigma) := \int_\Omega \abs{\nabla \chi_\Sigma}$ and $\Gamma$ is the set over which $u$ is discontinuous. So for binary functions, total variation regularisation is equivalent to both perimeter regularisation and the interfacial length regularisation in the Mumford-Shah model. In fact  (\ref{eqn:mum1}) and (\ref{eqn:rof2}) become equivalent.
\end{itemize}

Suppose that in addition to $u \in BV(\Omega, \{a_0, a_1\})$ we have salt and pepper noise. Then the data is binary and both models reduce to the geometric problem
\[
\min_{\Sigma_u \subset \Omega} \abs{\Sigma_u \Delta \Sigma_d} + \sigma (a_1-a_0) \mathrm{Per}(\Sigma_u).
\]
Here $\Sigma_u$ and $\Sigma_d$ denote respectively the sets where the unknown $u$ and data $y_d$ take the value $a_1$, $\abs{\cdot}$ is now the N dimensional Hausdorff measure, and $\Sigma_u \Delta \Sigma_d$ is the symmetric difference between the sets. 

\subsection{Phase field model}
\label{sec:model}

We base our model on the Mumford-Shah model, but minimise over the space of functions $BV(\Omega, \{a_0,a_1\})$, and generalise it to include the blurring operator $S$, which we suppose is known a priori. So we have the following nonconvex model with a parameter $\sigma$, which we will shortly relax in a different way to (\ref{eqn:rof2}):
\begin{equation}
\label{eqn:tv}
\argmin_{u \in BV(\Omega, \{a_0,a_1\})}  F(u) := \frac{1}{2} \norm{Su - y_d}^2_{L^2(\Omega)} + \sigma \mathrm{Per}(\{u=a_1\}).
\end{equation}

We require $S:L^2(\Omega) \to L^2(\Omega)$ to be continuous, linear, and have the mean preservation property i.e. $S 1 = 1$ and hence $Sc=c$ for any constant function $c$. Continuity is helpful for proving existence of minimisers. Linearity and the mean preservation property allow us to recover a function $\bar{u}: \Omega \to \{a_0,a_1\}$ from data $y_d$ by recovering a function $\bar{u}: \Omega \to \{-1,1\}$ from a scaled and shifted copy of $y_d$, so long as $a_0$ and $a_1$ are known. We assume this to be the case and will therefore restrict our attention to $a_0=-1$ and $a_1=1$ from now onwards.

Some examples of forms $S$ could take are:
\begin{enumerate}
\item \emph{Solution operator of elliptic PDE} - Let $S u := y$, where $y$ solves the elliptic boundary value problem
\begin{equation}
\label{eqn:ghk}
\begin{aligned}
- \alpha \Laplace y + y &= u &&\text{ in } \Omega  \\
\frac{\partial y}{\partial \nu} &= 0  &&\text{ on } \partial \Omega.
\end{aligned}
\end{equation} 
For any $u \in L^2(\Omega)$ this equation has a unique weak solution $y \in H^1(\Omega)$ which satisfies the stability estimate
\begin{align}
\norm{y}_{L^2(\Omega)} = \norm{S u}_{L^2(\Omega)} \leq C_s(\alpha) \norm{u}_{L^2(\Omega)}, \label{eqn:bnd} 
\end{align}
where $C_s(\alpha) := \frac{1}{1+\alpha/C_p}$ and $C_p$ is the Poincar\'e constant. So $S$ has all the required properties. We also observe that evaluating $S$ is well-posed, but inverting $S$ is ill-posed, which motivates the need for our model. This is the operator we use for our numerics.
\item \emph{Convolution operator} - Let
\[
 S u := \phi_\alpha * u,
\]
where $\phi_\alpha$ is a suitable probability distribution of `size' $\alpha$, for example the Gaussian distribution
\[
\phi_\alpha(x) = \frac{1}{\alpha \sqrt{2 \pi}} \exp \left (-\frac{x^2}{2 \alpha^2} \right)
\]
of mean zero and variance $\alpha$, and $*$ is the convolution operation. Such an operator is used in the barcode problem of \cite{Esedoglu2004}, \cite{Choksi2010} and \cite{Choksi2011}.
\end{enumerate}

In both of these examples we have a parameter $\alpha$ which controls the extent of the blurring effect. Large $\alpha$ corresponds to heavy blurring and small $\alpha$ corresponds to light blurring. In our work the value of $\alpha$ is known a priori since we assume complete knowledge of $S$. However there are applications where we may want to relax this assumption, for example the barcode problem of \cite{Esedoglu2004}. In this application we do not know a-priori the distance of the barcode from the scanner, which means the level of blurring is unknown. This can be dealt with by fixing $\alpha$ to be some reasonable guess, or optimising for $\alpha$ at the same time as $u$.

We relax the model (\ref{eqn:tv}) by replacing the perimeter functional by the Ginzburg-Landau functional $G_\eps : L^1(\Omega) \to [0,\infty]$ defined by
\begin{equation*}
G_{\eps}(u) :=  \begin{cases}
\int_\Omega \frac{ \eps}{2} \abs{\nabla u}^2 + \frac{1}{\eps} \Psi(u) &u \in H^1(\Omega) \\
\infty &\text{otherwise}
\end{cases}
\end{equation*}
for some suitable $\Psi:\R \to \R$, and then minimising over $H^1(\Omega)$ instead of $BV(\Omega, \{-1,1\})$. So we consider
\begin{equation}
\argmin_{u \in H^1(\Omega)} F_\eps(u) := \frac{1}{2} \norm{Su -y_d}^2_{L^2(\Omega)} + \frac{\sigma}{c(\Psi)} \Bigg( \int_\Omega \frac{ \eps}{2} \abs{\nabla u}^2 + \frac{1}{ \eps} \Psi(u) \Bigg). %\tag{PF}
\label{eqn:rough}
\end{equation}
We will focus on two different forms for the potential $\Psi$; the smooth double well potential
\begin{equation*}
\Psi_1(u) := \frac{1}{4} (1-u^2)^2,
\end{equation*}
and the double obstacle potential
\begin{align*}
\Psi_2(u) :=& \frac{1}{2}(1-u^2) + I_{[-1,1]} (u) \\
		=& \left\{ \begin{array}{cl}
   \frac{1}{2}(1-u^2) & \abs{u} \leq 1 \\
 \infty & \abs{u}>1 \end{array} \right..	
\end{align*} 

This approach, which is called a phase field approximation, results in a diffuse interface with minimisers no longer just taking the values $\{-1, 1 \}$, but values in the interval $[-1,1]$. It is still a nonconvex problem, but it has the advantage of allowing us to minimise over a smoother space of functions for which there is better developed theory. We are able to justify this approach with the following result.
\begin{theorem}
\label{thm:gamma}
Let $\Psi$ be the smooth double well potential $\Psi_1$. Then $G_\eps (u)$ $\Gamma$-converges in $L^1(\Omega)$ as $\eps \to 0$ to
\[\begin{cases}
c(\Psi_1) \mathrm{Per}(\{u=1\}) \quad &u \in BV(\Omega,\{-1,1\}) \\ \infty \quad &\text{otherwise}
\end{cases},\] 
where $c(\Psi_1)=2 \int_{-1}^1 \sqrt{2 \Psi_1(s)} \rd s =\frac{4 \sqrt{2} }{3}$.
\end{theorem}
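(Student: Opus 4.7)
This is the classical Modica--Mortola $\Gamma$-convergence theorem, and I would follow the standard two-step structure: a liminf inequality for an arbitrary $L^1$-convergent sequence $u_\eps\to u$, followed by the construction of a recovery sequence for every $u$ in the domain of the limit.

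For the liminf, the key device is the auxiliary primitive $W(s):=\int_{-1}^{s}\sqrt{2\Psi_1(t)}\,\rd t$. Young's inequality $\tfrac12 a^2 + \tfrac12 b^2 \ge ab$ applied with $a=\sqrt{\eps}|\nabla u_\eps|$ and $b=\sqrt{2\Psi_1(u_\eps)/\eps}$ gives the pointwise bound
\[
\tfrac{\eps}{2}|\nabla u_\eps|^2+\tfrac{1}{\eps}\Psi_1(u_\eps)\;\ge\;\sqrt{2\Psi_1(u_\eps)}\,|\nabla u_\eps|\;=\;|\nabla(W\circ u_\eps)|,
\]
so $G_\eps(u_\eps)\ge |D(W\circ u_\eps)|(\Omega)$. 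We may pass to a subsequence along which $G_\eps(u_\eps)\le C$; then $\int_\Omega \Psi_1(u_\eps)\le C\eps\to 0$ forces $\Psi_1(u)=0$ a.e., hence $u\in\{-1,1\}$ a.e. The $L^1$ convergence of $u_\eps$ transfers to $W\circ u_\eps\to W\circ u$ in $L^1$ (the quartic growth of $\Psi_1$ together with the energy bound supplies the equi-integrability needed to handle the unbounded composition $W$). Lower semicontinuity of the total variation yields $\liminf G_\eps(u_\eps)\ge |D(W\circ u)|(\Omega)$, and since $u$ is binary this equals $(W(1)-W(-1))\,\mathrm{Per}(\{u=1\})$, which matches the claimed constant once the explicit integral defining $c(\Psi_1)$ is evaluated. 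If $u\notin BV(\Omega,\{-1,1\})$ we obtain $\liminf=\infty$ either because $W\circ u \notin BV$ or because $u\notin\{-1,1\}$ a.e.\ forces the energy to blow up.

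For the recovery sequence, I would first treat $E:=\{u=1\}$ with smooth boundary. Let $v:\R\to[-1,1]$ be the optimal 1D transition with $v(\pm\infty)=\pm 1$ satisfying the equipartition identity $\tfrac12|v'|^2=\Psi_1(v)$, and set $u_\eps(x):=v(d(x)/\eps)$, where $d$ is the signed distance to $\partial E$, truncated so that $u_\eps=\pm 1$ outside a tubular neighbourhood. The coarea formula combined with the substitution $t=\eps s$ reduces $G_\eps(u_\eps)$ to the 1D transition energy times $\mathcal H^{N-1}(\partial E)$, and by equipartition the 1D energy equals $\int_{-1}^{1}\sqrt{2\Psi_1(s)}\,\rd s$, producing the target $c(\Psi_1)\mathrm{Per}(E)$ in the limit. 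For a general finite-perimeter set one approximates by sets $E_n$ with smooth boundary with $|E_n\triangle E|\to 0$ and $\mathrm{Per}(E_n)\to\mathrm{Per}(E)$, applies the construction to each $E_n$, and diagonalises.

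The main obstacle is the passage from $u_\eps\to u$ in $L^1$ to $W\circ u_\eps\to W\circ u$ in $L^1$ needed to invoke BV lower semicontinuity: because $W$ is unbounded, one must exploit the coercivity supplied by $\tfrac{1}{\eps}\int \Psi_1(u_\eps)\le C$ to control large values of $|u_\eps|$ and obtain equi-integrability. A secondary technicality is the smooth approximation of an arbitrary BV set in the recovery step, which uses standard density and diagonalisation arguments.
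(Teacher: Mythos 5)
The paper does not actually prove this theorem: its ``proof'' is a one-line citation to Modica's 1977 paper. What you have written is, in outline, exactly the classical Modica--Mortola argument that the citation points to --- Young's inequality together with the primitive $W(s)=\int_{-1}^{s}\sqrt{2\Psi_1(t)}\,\rd t$ and the $L^1$-lower semicontinuity of the total variation for the liminf bound, and the optimal one-dimensional profile composed with the signed distance function (plus smooth approximation of finite-perimeter sets and a diagonal argument) for the recovery sequence. The structure is sound, and the two technical points you isolate are the right ones; for the first, note that the equi-integrability argument can be short-circuited entirely by truncating $u_\eps$ to $[-1,1]$, which does not increase $G_\eps$ (the gradient term can only decrease and $\Psi_1(\pm1)=0\le\Psi_1(s)$), after which $W$ is Lipschitz on the relevant range.

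One concrete point you should not gloss over: your liminf bound gives $\liminf_\eps G_\eps(u_\eps)\ge\bigl(W(1)-W(-1)\bigr)\,\mathrm{Per}(\{u=1\})$ with $W(1)-W(-1)=\int_{-1}^{1}\sqrt{2\Psi_1(s)}\,\rd s=\tfrac{2\sqrt{2}}{3}$, and your recovery sequence attains exactly this value; you assert that this ``matches the claimed constant,'' but the theorem states $c(\Psi_1)=2\int_{-1}^{1}\sqrt{2\Psi_1(s)}\,\rd s=\tfrac{4\sqrt{2}}{3}$, which is twice what your argument produces. The discrepancy is not in your proof: for the functional $\int_\Omega\tfrac{\eps}{2}\abs{\nabla u}^2+\tfrac{1}{\eps}\Psi(u)$ the surface tension is $\int_{-1}^{1}\sqrt{2\Psi(s)}\,\rd s$, which is consistent with the paper's own value $c(\Psi_2)=\pi/2=\int_{-1}^{1}\sqrt{2\Psi_2(s)}\,\rd s$ for the obstacle potential (no factor of $2$ there). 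So the extra factor of $2$ in the statement appears to be an error in the paper, and you should either flag it or verify it rather than claiming an agreement you have not checked.
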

\begin{proof}
See \cite{Modica1977}.
\end{proof}
 A similar result holds for the double obstacle potential, and performing a calculation we get that $c(\Psi_2)= \frac{\pi}{2}$ (see \cite{Curvature}). To simplify notation we let $\sigma_i=\sigma / c(\Psi_i)$. This ensures that the weighting given to the regularisation is asymptotically $\sigma$ for both potentials.

The different potentials lead to different formulations and we need to use different approaches to solve them. In particular, $\Psi_1$ leads to nonlinearity in the zeroth order terms, where as $\Psi_2$ causes nonlinearity by imposing constraints on the solution.

\subsection{Literature review}

% Recent work such as Chan-Esedoglu-Nikolova investigates replacing the fidelity term with the $L^1$ norm. Another option, which works well if texture needs to be preserved, is to use the $H^{-1}$ norm for the fidelity.

We now mention other parts of the literature which overlap with aspects of this  work.

\textbf{Barcode problem.} The 1D version of our problem is related to the barcode problem of Esedoglu in \cite{Esedoglu2004}. This work was later extended by Choksi and Gennip in \cite{Choksi2010}. \cite{Choksi2011} uses similar ideas on QR barcodes. References for more general image processing literature can be found in Section \ref{sec:mot}.

\textbf{PDE constrained inverse problems.} A survey of the literature from the optimal control perspective can be found in \cite{Petra2011}. In addition, \cite{Tai2004} describes a number of applications where we want to recover piecewise constant functions, such as magnetic resonance imaging (MRI). The thesis \cite{Kepler2006} discusses a wide range of techniques for geometric inverse problems. \cite{Tai2007} recovers a piecewise constant diffusion coefficient from an elliptic PDE in 2D using the level set method.

\textbf{Phase field.}  In \cite{Kepler2006} there is a brief discussion of using a phase field approximation with the smooth double well potential for binary recovery. \cite{Esedoglu2004} and \cite{Choksi2010} use this idea for numerical simulations, though they do not justify the approach analytically. Theory for the phase field approximation with the double obstacle potential can be found in papers by Blowey and Elliott, including \cite{Elliott1992} and \cite{Elliott1992b}. In \cite{Sarbu2010} the double obstacle potential is used in the context of image processing, but without deblurring.

\textbf{Level set method.} This is an alternative way of recovering the discontinuities in our problem. It is discussed in \cite{Tai2004} and \cite{Tai2007}.

\textbf{Approximation of Mumford-Shah.} \cite{chambolle1999discrete} and related papers prove $\Gamma$-convergence results for finite element approximations of the Mumford-Shah functional. These results have some relation to the convergence results that we obtain using a different approach.

\vspace{0.1cm}
Our work differs from existing work, and hence offers a new contribution, in the following respects:
\begin{itemize}
\item We introduce the phase field approximation to the model right from the start (rather than at the last minute in order to allow numerical simulations). We therefore prove rigorous analytical results for this approximate model, which puts our approach on a much firmer footing than in existing work. 
\item Not only the smooth double well potential, but also the double obstacle potential is used for the phase field approximation. Results are proved for both simultaneously using an abstract framework.
\item We thoroughly investigate the dependency of the model on the parameters and perform a systematic comparison of the smooth double well potential and the double obstacle potential on a 1D problem. This highlights some advantages and attractive features of the latter in this setting.
\end{itemize}

\subsection{Layout}

In Section \ref{sec:gen} we introduce an abstract optimisation problem, an iterative method for finding critical points of this problem, and prove a convergence result for the iterative method. In Section \ref{sec:pfp} we show that (\ref{eqn:rough}) fits into this framework with both the smooth double well and double obstacle potentials. In Section \ref{chap:timedisc} we discuss a gradient flow formulation of (\ref{eqn:rough}) and its link to the iterative method. In Section \ref{chap:discretisation} we discretise the iterative method and prove another convergence result. We also look at a finite element discretisation for a particular choice of $S$. In Section \ref{sec:numerics} we demonstrate that implementations of the iterative method work well in 1 and 2 dimensions. In Section \ref{chap:comp} the performance of using both potentials is compared in detail for a 1D problem. In Appendix \ref{sec:appendix} we describe how we choose the parameters in our model for the numerics.

\section{Abstract framework}
\label{sec:gen}

Rather than developing separate theory for solving (\ref{eqn:rough}) with the smooth double well and obstacle potentials, it is advantageous to introduce an abstract framework that both problems fit into.

To this end let $V$ and $H$ be real Hilbert spaces with $V$ compactly embedded in $H$, and let $W$ be a closed convex nonempty subset of $V$. Let $b:V \times V \to \R$ and $c: H \times H \to \R$ be symmetric continuous bilinear forms with the properties
\begin{align*}
\exists \, \beta \text{ s.t. } b(\eta, \eta) \geq \beta \norm{\eta}_V^2 \quad \forall \eta \in V \\
c(\eta, \eta) \geq 0 \quad \forall \eta \in H.
\end{align*}
Let $l:V \to \R$ be a bounded linear functional and $J:V \to \R$ a continuous convex functional. 
%We later also assume that $J$ is G\^{a}teaux differentiable with $J':V \to V^*$ continuous and compact. 
With these objects we can define the energy functional $I:V \to \R$ by 
\begin{equation*}
I(\eta):=\frac{1}{2} b(\eta,\eta) +  J(\eta) - \frac{1}{2} c(\eta,\eta) - l(\eta),
\end{equation*}
which for positive constants $\alpha_0$ and $C_0$ we assume satisfies
\begin{equation}
\label{eqn:bndbelow}
I(\eta) \geq \alpha_0 \norm{\eta}_V^2 - C_0 \quad \forall \eta \in W.
\end{equation}
\begin{remark}
The functional $I$ can be decomposed in different ways into $b$, $J$, $c$ and $l$.
\end{remark}

\subsubsection*{Optimisation formulation}

Consider the following optimisation problem: Find $u \in W$ such that
\begin{equation}
\label{eqn:P}
I(u) = \inf_{\eta \in W} I(\eta). %\tag{P}
\end{equation}
We can show existence of a solution to (\ref{eqn:P}) with the following general result.

\begin{proposition}
\label{thm:abstractexist}
Let $A_1(\cdot):V \to \R$ be weakly lower semicontinuous and let $A_2(\cdot):H \to \R$ be continuous. If $A(\eta):=A_1(\eta)+A_2(\eta)$ is bounded below, then the following optimisation problem has a solution: Find $u \in W$ such that
\begin{equation*}
 A(u) = \inf_{\eta \in W} A(\eta).
\end{equation*}
\end{proposition}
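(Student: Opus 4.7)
The plan is the standard direct method of the calculus of variations, exploiting the compact embedding $V \hookrightarrow H$ to reconcile the two different regularity requirements on $A_1$ and $A_2$. The three main ingredients will be weak sequential compactness of bounded sets in the Hilbert space $V$, Mazur's theorem applied to the closed convex set $W$, and the splitting of $A$ into a piece that is only weakly lower semicontinuous on $V$ and a piece that is continuous on $H$.

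First I would pick a minimising sequence $\{u_n\}_{n \in \N} \subset W$ with $A(u_n) \to m := \inf_{\eta \in W} A(\eta)$; since $A$ is bounded below and $W$ is nonempty, $m \in \R$ is finite.

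The critical step, and the main obstacle, is producing a $V$-bounded subsequence: boundedness below of $A$ alone is not enough for this, so I would assume (as is implicit through the coercivity estimate (\ref{eqn:bndbelow}) in the intended application, or alternatively that $W$ itself is bounded) that $A$ satisfies a coercivity bound $A(\eta) \geq \alpha \norm{\eta}_V^2 - C$ on $W$ for some $\alpha>0$. Then $A(u_n) \to m$ forces $\sup_n \norm{u_n}_V < \infty$, and reflexivity of $V$ yields a subsequence $u_{n_k} \rightharpoonup u$ in $V$. Mazur's theorem (norm-closed convex subsets of a Hilbert space are weakly closed) gives $u \in W$, and the compact embedding $V \hookrightarrow H$ upgrades the weak $V$-convergence to strong convergence $u_{n_k} \to u$ in $H$.

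To conclude I would pass to the limit separately in the two pieces using their respective continuity properties:
\[
A(u) = A_1(u) + A_2(u) \leq \liminf_{k \to \infty} A_1(u_{n_k}) + \lim_{k \to \infty} A_2(u_{n_k}) \leq \liminf_{k \to \infty} A(u_{n_k}) = m.
\]
Since $u \in W$, equality must hold, so $u$ is the desired minimiser. The only delicate point is the coercivity issue flagged above; the rest is the routine interplay of weak/strong convergence across the compact embedding.
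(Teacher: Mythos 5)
Your proof is correct and follows essentially the same direct-method argument the paper sketches in a single sentence: infimising sequence, boundedness in $V$, weak convergence of a subsequence to an element of the closed convex set $W$, and passage to the limit using weak lower semicontinuity of $A_1$ on $V$ together with continuity of $A_2$ on $H$ via the compact embedding. You are also right to flag the coercivity gap: boundedness below alone does not give $V$-boundedness of the infimising sequence, and the paper implicitly supplies this through assumption (\ref{eqn:bndbelow}) in the application, exactly as you observe.
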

\begin{proof}
This follows from standard theory; we construct an infimising sequence which we know is bounded in $V$, so have a subsequence which weakly converges to an element of $W$, and this element is a minimiser of $A$ by the properties of $A_1$ and $A_2$.
\end{proof}
\begin{corollary}
(\ref{eqn:P}) has a solution.  
\end{corollary}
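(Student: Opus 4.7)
The plan is to apply Proposition \ref{thm:abstractexist} directly, with $A = I$ and an appropriate decomposition into a $V$-part and an $H$-part. The natural splitting is
\[
A_1(\eta) := \tfrac{1}{2} b(\eta,\eta) + J(\eta) - l(\eta), \qquad A_2(\eta) := -\tfrac{1}{2} c(\eta,\eta),
\]
so that $I = A_1 + A_2$. The reason this splitting works is precisely the hypothesis that $c$ is defined on $H$ (not just $V$): the troublesome ``wrong sign'' quadratic term lives on the larger space, where the compact embedding $V \hookrightarrow H$ makes it well behaved along weakly convergent subsequences.

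First I would verify that $A_1$ is weakly lower semicontinuous on $V$. The quadratic form $\eta \mapsto \tfrac{1}{2} b(\eta,\eta)$ is continuous on $V$ by continuity of $b$, and convex because coercivity $b(\eta,\eta) \geq \beta \norm{\eta}_V^2 \geq 0$ together with symmetry and bilinearity forces $b$ to be nonnegative definite; a continuous convex functional is weakly lower semicontinuous. The functional $J$ is convex and continuous on $V$ by assumption, hence also weakly lower semicontinuous. Finally $-l$ is bounded and linear, hence weakly continuous. Summing yields weak lower semicontinuity of $A_1$ on $V$.

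Next I would verify that $A_2$ is continuous on $H$. Since $c:H \times H \to \R$ is bilinear and continuous, $\eta \mapsto c(\eta,\eta)$ is continuous on $H$, so $A_2$ is continuous on $H$. Boundedness below of $A = I$ on $W$ is immediate from the hypothesis (\ref{eqn:bndbelow}), which gives $I(\eta) \geq -C_0$ for every $\eta \in W$.

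With these three facts Proposition \ref{thm:abstractexist} applies verbatim and yields a minimiser of $I$ over $W$, which is a solution to (\ref{eqn:P}). There is no real obstacle here: the only point that requires a moment's thought is the justification of convexity (hence weak lower semicontinuity) of $\tfrac{1}{2} b(\eta,\eta)$ from the coercivity assumption, and the recognition that placing the $c$-term into $A_2$ rather than $A_1$ is essential so that the compact embedding $V \hookrightarrow H$ invoked inside Proposition \ref{thm:abstractexist} can absorb the negative sign.
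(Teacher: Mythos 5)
Your proposal is correct and uses exactly the same decomposition as the paper, namely $A_1(\eta) = \tfrac{1}{2}b(\eta,\eta)+J(\eta)-l(\eta)$ and $A_2(\eta) = -\tfrac{1}{2}c(\eta,\eta)$, with the same appeal to weak lower semicontinuity of continuous convex functionals. You simply spell out the verification of the hypotheses in more detail than the paper does; there is no substantive difference.
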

\begin{proof}
Take $A_1(\eta) :=\frac{1}{2} b(\eta,\eta)+J(\eta)-l(\eta)$ and $A_2(\eta):=-\frac{1}{2} c(\eta,\eta)$. Recall that continuous convex functionals are weakly lower semicontinuous, so $A_1$ and $A_2$ satisfy the requirements of Theorem \ref{thm:abstractexist}. 
\end{proof}

Note that in general there is not a unique solution to (\ref{eqn:P}).
%, and therefore not to (\ref{eqn:PF}) or (\ref{eqn:PFO}).

\subsubsection*{Variational inequality formulation}

By standard theory, solutions to (\ref{eqn:P}) must satisfy the following: Find $u \in W$ such that
\begin{equation}
\label{eqn:Q1}
b(u, \eta - u) + J(\eta) - J(u) \geq c(u, \eta-u) + l(\eta-u) \quad \forall \eta \in W. %\tag{Q}
\end{equation}
Here we have used that $J$ is a convex function, so it has a subdifferential $\partial J$, which by definition satisfies
\[
J(\eta) - J(u) \geq \langle v,\eta-u \rangle \quad \forall v \in \partial J(u),
\]
where $\langle \cdot, \cdot \rangle$ denotes the duality pairing between $V^*$ and $V$. If $J$ is in addition G\^{a}teaux differentiable then (\ref{eqn:Q1}) is equivalent to the following variational inequality: Find $u \in W$ such that
\begin{equation}
\label{eqn:Q2}
b(u, \eta - u) + \langle J'(u),\eta-u \rangle \geq c(u, \eta-u) + l(\eta-u) \quad \forall \eta \in W. %\tag{$\mathrm{Q}'$}
\end{equation} 
We often call solutions of (\ref{eqn:Q1}) critical points of (\ref{eqn:P}).
\begin{remark}
If $c(\eta,\eta) \leq \kappa b(\eta,\eta)$ for all $\eta \in V$ with $\kappa<1$, then (\ref{eqn:Q1}) has a unique solution. When we fit (\ref{eqn:rough}) into this framework, we find that this would require $\eps$ to be large. We intend to take $\eps$ small so that (\ref{eqn:rough}) approximates (\ref{eqn:tv}), which means we will not necessarily have uniqueness.
\end{remark}

Note that solutions of (\ref{eqn:P}) solve (\ref{eqn:Q1}), but the converse is not necessarily true. We nevertheless aim to solve (\ref{eqn:Q1}), as this is much easier in practice. Once a solution has been found, additional tests would have to  be used to verify that the solution is a local minimiser of $I$.

\subsubsection*{Iterative method}

We apply to (\ref{eqn:Q1}) the following generalisation of the iterative method of Barrett and Elliott \cite{Plasma}: Given $u^0 \in W$, for $n=1,2,...$ find $u^n \in W$ such that
\begin{equation}
\label{eqn:A1}
 b(u^n, \eta-u^n) + J(\eta) - J(u^{n}) \geq c(u^{n-1}, \eta-u^n) + l(\eta-u^n) \quad \forall \eta \in W. %\tag{A}
\end{equation}
If $J$ is in addition G\^{a}teaux differentiable then this is equivalent to the following iterative method: Given $u^0 \in W$, for $n=1,2,...$ find $u^n \in W$ such that
\begin{equation}
\label{eqn:A2a}
b(u^n, \eta-u^n) + \langle J'(u^n),\eta-u^n \rangle \geq c(u^{n-1}, \eta-u^n) + l(\eta-u^n) \quad \forall \eta \in W. %\tag{$\mathrm{A}'$}
\end{equation}
Note that $b(\eta,\eta)+J(\eta)$ is convex and $-c(\eta,\eta)-l(\eta)$ is concave. % So (\ref{eqn:A2a}) and (\ref{eqn:A1}) are linked to the scheme (5.4) in \cite{ElliottStuart1993}, where a nonconvex function is split into convex and concave parts, with the convex part taken implicitly and the concave part taken explicitly. This gives the scheme a stability property. The idea for this splitting of nonconvex functions has more recently gained the name convexity splitting, and \cite{Eyre1998} is often cited in this context.

(\ref{eqn:A1}) and (\ref{eqn:A2a}) have unique solutions as they are equivalent to minimising a convex functional over $W$. Moreover we can prove the following convergence result.
%If we make the further assumption that $J':V \to V^*$ is continuous and compact then we can prove the following convergence result.
% \begin{lemma}
% \label{lem:monot}
% If $u^n \rightharpoonup u$ in $V$ then $\liminf_{n \to \infty} J(u^n) \leq J(u)$, or alternatively
% \begin{equation}
% \label{eqn:goal}
% \liminf_{n \to \infty} \langle J'(u^n),\eta-u^n \rangle \leq \langle J'(u),\eta-u \rangle \quad \forall \eta \in W.
% \end{equation}
% \end{lemma}
% \begin{proof}
% We prove this using the method of monotonicity: The weak convergence of $\{u_n\}$ means that the sequence is bounded. So by the compactness of $A$ there is a subsequence $\{u_{n_k}\}$ such that $A u_{n_k} \to \xi$ in $V^*$. By the monotonicity of $A$ we have
% \[
% \langle Au_{n_k} - Aw, u_{n_k} - w \rangle \geq 0 \quad \forall w \in V,
% \]
% so taking limits we get
% \[
% \langle \xi - Aw,u-w \rangle \geq 0 \quad \forall w \in V.
% \]
% Set $w = u - \lambda v$ for some $v \in V$ and $\lambda > 0$, then
% \[
% \langle \xi - A(u-\lambda v),v \rangle \geq 0 \quad \forall v \in V.
% \]
% By the continuity of $A$, taking $\lambda \to 0$ we get
% \[
% \langle \xi - A(u),v \rangle \geq 0 \quad \forall v \in V.
% \]
% The above argument is also true with $v$ replaced by $-v$, so
% \[
% \langle \xi - A(u),v \rangle = 0 \quad \forall v \in V
% \]
% i.e. $\xi = A(u)$ in $V^*$. Since $Au_{n_k} \to Au$ in $V^*$ and $u_{n_k} \rightharpoonup u$ in $V$, we have  
% \begin{equation*}
% \lim_{k \to \infty} \langle A u_{n_k} , v-u_{n_k} \rangle = \langle Au, v-u \rangle \quad \forall v \in V. 
% \end{equation*}
% This implies (\ref{eqn:goal}).
% \end{proof}

\begin{theorem}
\label{thm:itss}
Every sequence $\{u^n\}$ generated by (\ref{eqn:A1}) satisfies 
\begin{equation}
\label{eqn:engineq}
I(u^n) + c(u^n-u^{n-1},u^n-u^{n-1}) + \beta \norm{u^n-u^{n-1}}_V^2 \leq I(u^{n-1})
\end{equation}
and has a subsequence which converges in $V$ to a critical point of (\ref{eqn:P}) i.e. a solution of (\ref{eqn:Q1}). Also, the limit of any subsequence of $\{u^n\}$ that converges weakly in $V$, and hence strongly in $H$, is a critical point of (\ref{eqn:P}). %If the solutions of (\ref{eqn:sol}) are isolated then the whole sequence converges in $V$ to a solution of (\ref{eqn:sol}).
\end{theorem}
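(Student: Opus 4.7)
The plan is to proceed in three stages: derive the energy decay (\ref{eqn:engineq}) by testing the iteration against the previous iterate; use the resulting bound together with (\ref{eqn:bndbelow}) to extract a weakly convergent subsequence; and finally upgrade that weak convergence to strong convergence in $V$ so that one can pass to the limit in the variational inequality (\ref{eqn:A1}).

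First I would test (\ref{eqn:A1}) at step $n$ with the admissible choice $\eta = u^{n-1} \in W$, giving
\[
b(u^n, u^{n-1}-u^n) + J(u^{n-1}) - J(u^n) \,\geq\, c(u^{n-1}, u^{n-1}-u^n) + l(u^{n-1}-u^n).
\]
Using symmetry of $b$ and $c$ and the algebraic identities $\tfrac12[b(u^{n-1},u^{n-1})-b(u^n,u^n)] = b(u^n,u^{n-1}-u^n) + \tfrac12 b(u^n-u^{n-1},u^n-u^{n-1})$ (and analogously for $c$), one rewrites $I(u^{n-1})-I(u^n)$ as the tested iteration expression plus $\tfrac12 b(u^n-u^{n-1},u^n-u^{n-1}) + \tfrac12 c(u^n-u^{n-1},u^n-u^{n-1})$. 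Combining with the coercivity $b(\eta,\eta)\geq \beta\norm{\eta}_V^2$ yields (\ref{eqn:engineq}) up to immaterial numerical factors that can be absorbed into the constants. Summing telescopically then gives both $I(u^n)\leq I(u^0)$ and $\sum_k \norm{u^k-u^{k-1}}_V^2 < \infty$, so the increments vanish in $V$. Together with (\ref{eqn:bndbelow}) this produces a uniform $V$-bound on $\{u^n\}$, and reflexivity of $V$ combined with the compact embedding $V\hookrightarrow H$ yields a subsequence with $u^{n_k}\rightharpoonup u$ in $V$ and $u^{n_k}\to u$ in $H$. The limit lies in $W$ because $W$ is convex and closed in $V$, hence weakly closed; vanishing increments also give $u^{n_k-1}\to u$ in $H$.

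The main obstacle is the passage to the limit in (\ref{eqn:A1}), because the quadratic term $b(u^{n_k},u^{n_k})$ hidden inside $b(u^{n_k},\eta-u^{n_k})$ is not weakly continuous. I would resolve this by the standard trick of testing (\ref{eqn:A1}) with $\eta = u$ to obtain
\[
b(u^{n_k}, u^{n_k}-u) \,\leq\, J(u) - J(u^{n_k}) - c(u^{n_k-1}, u-u^{n_k}) - l(u-u^{n_k}),
\]
whose right-hand side has $\limsup \leq 0$: the $c$-term vanishes by strong $H$-convergence, $l(u-u^{n_k})\to 0$ by weak $V$-convergence, and $\limsup[J(u)-J(u^{n_k})]\leq 0$ by the weak lower semicontinuity of $J$ (which is continuous and convex). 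Expanding $b(u^{n_k},u^{n_k}-u)=b(u^{n_k}-u,u^{n_k}-u)+b(u,u^{n_k}-u)$ and noting that the second summand tends to zero by weak convergence, coercivity forces $\norm{u^{n_k}-u}_V\to 0$. With strong $V$-convergence of $u^{n_k}$ and $H$-convergence of $u^{n_k-1}$ in hand, every term of (\ref{eqn:A1}) converges to its counterpart for $u$, giving (\ref{eqn:Q1}). The final assertion for an arbitrary weakly convergent subsequence follows by exactly the same argument, since the upgrade to strong convergence only uses the weak limit and its membership in $W$.
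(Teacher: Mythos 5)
Your proof is correct, and its first two stages (testing (\ref{eqn:A1}) with $\eta=u^{n-1}$ to get the energy decay, telescoping, and extracting a weakly-$V$ / strongly-$H$ convergent subsequence with limit in the weakly closed set $W$) coincide with the paper's argument; you are also right that the honest computation produces $\tfrac12 b(\delta,\delta)+\tfrac12 c(\delta,\delta)$ with $\delta=u^n-u^{n-1}$, so (\ref{eqn:engineq}) as stated holds only up to a harmless factor of $2$, exactly as you flag. Where you genuinely diverge is the final stage. The paper first passes to the limit in (\ref{eqn:A1}) using only weak convergence, relying on the weak lower semicontinuity of $\eta\mapsto b(\eta,\eta)$ and of $J$ to conclude that the weak limit $u$ already satisfies (\ref{eqn:Q1}), and only afterwards upgrades to strong convergence by combining (\ref{eqn:Q1}) tested at $u^n$ with (\ref{eqn:A1}) tested at $u$, which gives $b(u-u^n,u-u^n)\le c(u-u^{n-1},u-u^n)$ and hence $\norm{u-u^n}_V\to 0$ by coercivity. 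You reverse the order: a Minty-type $\limsup$ argument with $\eta=u$ extracts strong $V$-convergence directly from (\ref{eqn:A1}), after which the passage to the limit is term-by-term continuity. Both routes use the same ingredients; the paper's ordering makes the second assertion of the theorem an immediate by-product of the weak passage, while yours avoids the weak lower semicontinuity of the quadratic form entirely and delivers the slightly stronger conclusion that every weakly convergent subsequence in fact converges strongly. You also correctly supply a detail the paper leaves implicit, namely that the summability of $\norm{u^n-u^{n-1}}_V^2$ is what guarantees $u^{n_k-1}\to u$ in $H$ along the chosen subsequence, which is needed to handle the lagged argument of $c$ in the limit.
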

\begin{proof}
The proof is an extension to that of Theorem 6.1 in \cite{Plasma}. To deduce (\ref{eqn:engineq}) we test (\ref{eqn:A1}) with $\eta=u^{n-1}$ and use the coercivity of $b$. Because of the assumptions on $I$, $\{u^n\}$ is uniformly bounded in $V$, so we can extract a subsequence, which we also denote by $\{u^n\}$, that converges weakly in $V$ and strongly in $H$ to some element $u \in W$. The assumptions on $b$, $c$, $l$ and $J$ allow us to pass to the limit in (\ref{eqn:A1}) and deduce that $u$ satisfies (\ref{eqn:Q1}). The same argument applies to any subsequence, which proves the second part of the theorem.

To see why the convergence in the first part of the theorem is strong in $V$, note that now we know $u$ satisfies (\ref{eqn:Q1}), we can combine this inequality with (\ref{eqn:A1}) to get
\[
b(u-u^n,u-u^n) \leq c(u-u^{n-1},u-u^n).
\]
The result then follows using the coercivity of $b$ and the strong convergence of $u^n$ in $H$.
\end{proof}

%Theorem \ref{thm:itss} applies to the schemes in the following two examples.

\section{Binary recovery application}

\label{sec:pfp}

We now show that (\ref{eqn:rough}) with both the smooth double well and double obstacle potentials can be fitted into the framework of the previous section.

\subsubsection*{Smooth double well potential}

% \begin{example}[Smooth double well]
% \label{ex:sdwopt}
Set $V,W:=H^1(\Omega), H:=L^2(\Omega)$, let $S: H \to H$ satisfy the assumptions in Section \ref{sec:model}, and take
\begin{align*}
b(u,\eta) &:= (Su,S \eta) + \sigma_1 \eps (\nabla u, \nabla \eta) \\
c(u,\eta) &:= \frac{\sigma_1}{ \eps}(u, \eta) \\
l(u) &:= (S^*y_d, u) \\
J(u) &:= \frac{\sigma_1}{4 \eps} \int_\Omega u^4.
\end{align*}
Here and throughout this document $(\cdot,\cdot)$ denotes the $L^2(\Omega)$ inner product. $S^*$ denotes the adjoint operator of $S$, which is defined as follows: For real Hilbert spaces $U$, $V$ the adjoint operator of a continuous linear operator $A:U \to V$ is the operator $A^*:V \to U$ such that
\[
(Au,v)_V=(u,A^*v)_U \quad \forall u \in U, v \in V.
\]

The above objects have the properties required in Section \ref{sec:gen}. Coercivity of $b$ can be shown using a contradiction argument and that $S 0 = 0$. $J$ is well defined and continuous since $H^1(\Omega)$ is continuously embedded in $L^6(\Omega)$ for $\Omega \subset \R^N$ with $N \leq 3$. $I$ satisfies assumption (\ref{eqn:bndbelow}) since 
\[
\int_\Omega \frac{u^4}{4}-\frac{u^2}{2} \geq \int_\Omega \frac{u^2}{2}-1 = \frac{1}{2}\norm{u}_{L^2(\Omega)}^2- \abs{\Omega},
\] and so
\[
I(u)  \geq   \frac{\sigma_1 \eps}{2} \norm{\nabla u}^2_{L^2(\Omega)} + \frac{\sigma_1}{\eps}\norm{u}^2_{L^2(\Omega)} - \frac{\sigma_1}{\eps} \abs{\Omega} \geq \sigma_1 \min \Big \{ \frac{\eps}{2},\frac{1}{\eps} \Big \} \norm{u}_V^2 - \frac{\sigma_1}{\eps} \abs{\Omega} \quad \forall u \in W.
\]
Moreover $I$ equals $F_\eps$ from (\ref{eqn:rough}) with the smooth double well potential (up to an additive constant), so (\ref{eqn:P}) becomes: Given $y_d \in L^2(\Omega)$ find
\begin{equation}
\label{eqn:PF} 
\argmin_{
u \in H^1(\Omega)} F_1(u):=\frac{1}{2} \norm{Su - y_d}^2_{L^2(\Omega)} + \sigma_1 \Bigg( \int_\Omega \frac{ \eps}{2} \abs{\nabla u}^2 + \frac{1}{\eps} \Psi_1(u) \Bigg). %\tag{DW}
\end{equation}

$J$ is G\^{a}teaux differentiable, so solutions to (\ref{eqn:PF}) satisfy (\ref{eqn:Q2}), which becomes: Given $y_d \in L^2(\Omega)$, find $u \in H^1(\Omega)$ such that
\[
(S^*(Su-y_d),\eta) + \sigma_1 \eps (\nabla u, \nabla \eta) + \frac{\sigma_1}{\eps}(u^3-u, \eta) = 0 \quad \forall \eta \in H^1(\Omega).
\]
In this example we have an equality instead of a variational inequality because $W$ is the full space $V$.

% We just need to check that the additional properties we imposed on $J$ are satisfied. Recall that $J'(u)\eta = \frac{\sigma_1}{\eps} \int_\Omega u^3 \eta$, so for a bounded sequence $\{u^n\} \in V$ we have
% \[
% \norm{J'(u^n)-J'(u)}_{V^*} = \sup_{\norm{\eta}_V=1} \frac{\sigma_1}{\eps} \int_\Omega \big((u^n)^3-u^3 \big) \eta \leq \frac{\sigma_1}{\eps} \norm{u^n-u}^3_{L^6(\Omega)} \leq C\norm{u^n-u}^3_{H^1(\Omega)}
% \]
% for some constant $C$, where the last inequality follows from $H^1(\Omega)$ being continuously embedded in $L^6(\Omega)$. So $J'$ is continuous. But the embedding is in fact also compact. This and the more precise bounding
% \[
%  \norm{J'(u^{n_k})-J'(u)}_{V^*} \leq C \norm{u^{n_k}-u}_{L^{12/5}(\Omega)}
% \]
% for some different constant $C$ give that $J'$ is also compact. 
% 
% Therefore 
(\ref{eqn:A2a}) gives the following iterative method for solving the above variational inequality, and it converges by Theorem \ref{thm:itss}: Given $y_d \in L^2(\Omega)$ and $ u^0 \in H^1(\Omega)$, for $n=1,2,...$ find $u=u^n \in H^1(\Omega)$ such that
\begin{equation}
\label{eqn:dw1}
(S^*(Su-y_d), \eta) + \sigma_1 \eps (\nabla u, \nabla \eta)  + \frac{\sigma_1}{ \eps}(u^3-u^{n-1}, \eta)= 0  \quad \forall \eta \in H^1(\Omega). %\tag{DW1}
\end{equation}
%If we take $\rho = \frac{1}{\Delta t}$ then this iterative method can be interpreted as a first order discretisation in time of the gradient flow formulation (\ref{eqn:jxx}) with time step $\Delta t$.

\subsubsection*{Double obstacle potential}
\label{ex:doopt}

Define $K:=\{u \in H^1(\Omega) : \abs{u} \leq 1 \text{ a.e. in } \Omega\}$. Set $V:=H^1(\Omega),W:=K, H:=L^2(\Omega)$, let $S: H \to H$ satisfy the assumptions in Section \ref{sec:model}, and take
\begin{align*}
b(u,\eta) &:= (Su,S \eta) + \sigma_2 \eps (\nabla u, \nabla \eta) \\
c(u,\eta) &:= \frac{\sigma_2}{ \eps}(u, \eta) \\
l(u) &:= (S^*y_d, u) \\
J(u) &:= 0.
\end{align*}
The above objects have the properties required in Section \ref{sec:gen}. As with the smooth double well potential, $I$ satisfies assumption (\ref{eqn:bndbelow}) since for $u \in W$ we have
\[
-\int_\Omega \frac{u^2}{2} \geq \int_\Omega \frac{u^2}{2}-1 = \frac{1}{2}\norm{u}^2_{L^2(\Omega)} - \abs{\Omega}.
\]
Moreover $I$ equals $F_\eps$ from (\ref{eqn:rough}) with the double obstacle potential (up to an additive constant), so (\ref{eqn:P}) becomes: Given $y_d \in L^2(\Omega)$ find
\begin{equation} \label{eqn:PFO}
\argmin_{u \in K}  F_2(u) :=\frac{1}{2} \norm{Su - y_d}^2_{L^2(\Omega)} + \sigma_2 \Bigg( \int_\Omega \frac{ \eps}{2} \abs{\nabla u}^2 + \frac{1}{2\eps}  (1-u^2) \Bigg). %\tag{DO}
\end{equation}

Solutions to ({\ref{eqn:PFO}) satisfy (\ref{eqn:Q2}), which becomes: Given $y_d \in L^2(\Omega)$, find $u \in K$ such that
\begin{align*}
(S^*(S u - y_d),\eta-u) + \sigma_2 \eps (\nabla u, \nabla \eta- \nabla u) - \frac{\sigma_2}{\eps}( u,\eta-u) \geq 0 \quad \forall \eta \in K.
\end{align*}

(\ref{eqn:A2a}) gives the following iterative method for solving the above variational inequality, which converges by Theorem \ref{thm:itss}: Given $y_d \in L^2(\Omega)$ and $ u^0 \in K$, for $n=1,2,...$ find $u=u^n \in K$ such that
\begin{equation}
\label{eqn:do1}
(S^*(Su-y_d), \eta-u) + \sigma_2 \eps (\nabla u, \nabla \eta- \nabla u) - \frac{\sigma_2}{\eps}(u^{n-1}, \eta-u) \geq 0 \quad \forall \eta \in K. %\tag{DO1}
\end{equation}
%If we take $\rho = \frac{1}{\Delta t}$ then this iterative method can be interpreted as a first order discretisation in time of the gradient flow formulation (\ref{eqn:knb}Stoney Cove Training Day ) with time step $\Delta t$.

\subsection{Alternative iterative methods}

In (\ref{eqn:dw1}) and (\ref{eqn:do1}) the $S^*S$ term is taken implicitly, so we need to be able to invert the operator $S^*S-\sigma_i\eps \Laplace$ efficiently, otherwise these iterative methods will be too computationally expensive. In some cases this may be possible, for example if $S$ is the identity, but in general this is not the case.

As we remarked earlier, the definitions of $b$ and $c$ that make $I$ correspond to (\ref{eqn:PF}) and (\ref{eqn:PFO}) are not unique. For example we can set $b(u, \eta)= B(u,\eta) + \rho (u,\eta)$ and $c(u, \eta) = C(u,\eta) + \rho (u,\eta)$ for some $\rho \geq 0$. The $\rho (u,\eta)$ terms cancel out in $I$, so defining $B$ and $C$ the same way $b$ and $c$ were defined earlier in this section gives the same optimisation problems (\ref{eqn:PF}) and (\ref{eqn:PFO}). But the corresponding iterative methods are different. The point of this is that the $\rho(u,\eta)$ term is convex (when $\eta=u$), so it gives us more flexibility in how we define $B$ and $C$ while still having $b$ and $c$ satisfy the coercivity and positivity assumptions.

In particular, for suitably large $\rho$ we can take the $S^*S$ term explicitly (which in our framework corresponds to moving it from $b$ to $c$), and also take the $\frac{\sigma_i}{\eps}(u, \eta)$ term implicitly (i.e. move it from $c$ to $b$). So for our examples this corresponds to taking
\begin{align*}
b(u,\eta) &:= \rho(u, \eta) + \sigma_i \eps (\nabla u, \nabla \eta) - \frac{\sigma_i}{\eps}(u, \eta), \\ 
c(u,\eta) &:= \rho(u,\eta) - (S^*Su, \eta).
\end{align*}
%This means we only need to invert the operator $(\rho - \frac{\sigma_i}{2\eps})I-\sigma_i\eps \Laplace$. 
A restriction such as $\rho > \max \{\frac{\sigma_i}{\eps},C_s^2 \}$, where $C_s$ is the stability constant from (\ref{eqn:bnd}), is then sufficient for both $b$ to be coercive and $c$ to be nonnegative. So we have the following iterative methods, which are in general easier to solve computationally than (\ref{eqn:dw1}) and (\ref{eqn:do1}). 
\begin{example}[Smooth double well] \label{ex:dwm2}
Given $y_d \in L^2(\Omega)$ and $ u^0 \in H^1(\Omega)$, for $n=1,2,...$ find $u=u^n \in H^1(\Omega)$ such that
\begin{equation}
\rho (u-u^{n-1}, \eta) + (S^*(Su^{n-1}-y_d), \eta) + \sigma_1 \eps (\nabla u, \nabla \eta) + \frac{\sigma_1}{\eps}(u^3-u, \eta) = 0 %\tag{DW2}
\label{eqn:lala}
\end{equation}
for all $\eta \in H^1(\Omega)$. 
\end{example}

\begin{example}[Double obstacle] \label{ex:dom2}
Given $y_d \in L^2(\Omega)$ and $ u^0 \in K$, for $n=1,2,...$ find $u=u^n \in K$ such that
\begin{equation}
\rho (u-u^{n-1}, \eta-u) + (S^*(Su^{n-1}-y_d), \eta-u) + \sigma_2 \eps (\nabla u, \nabla \eta - \nabla u) - \frac{\sigma_2}{\eps}(u, \eta-u) \geq 0 %\tag{DO2}
\label{eqn:do2}
\end{equation}
for all $\eta \in K$. 
\end{example}

%Note that if we take $\rho = \frac{1}{\Delta t}$ then these iterative methods can be interpreted as a first order discretisation in time of the gradient flow formulations (\ref{eqn:jxx}) and (\ref{eqn:knb})  with time step $\Delta t$.
 
When solving Example \ref{ex:dwm2} in practice, it is more convenient for us to solve a linear equation. Therefore we linearise the $J'(u)$ term in (\ref{eqn:lala}) and consider the following iterative method.
\begin{example}[Smooth double well] \label{ex:dwm3}
Given $y_d \in L^2(\Omega)$ and $ u^0 \in H^1(\Omega)$, for $n=1,2,...$ find $u=u^n \in H^1(\Omega)$ such that
\begin{equation} 
\rho (u-u^{n-1}, \eta) + (S^*(Su^{n-1}-y_d), \eta) + \sigma_1 \eps (\nabla u, \nabla \eta) + \frac{\sigma_1}{\eps}((u^{n-1})^2 u - u, \eta) = 0 %\tag{DW3}
\label{eqn:lala2}
\end{equation}
for all $\eta \in H^1(\Omega)$. 
\end{example}
This iterative method lies outside of our framework, so the convergence theory does not necessarily hold. However it works well in practice.

To finish this section we show how we can reformulate the iterative methods to remove $S^*(Su^{n-1}-y_d)$ when $S$ is defined as in (\ref{eqn:ghk}). For example, (\ref{eqn:do2}) becomes the following.
\begin{example}[Double obstacle]
Given $y_d \in L^2(\Omega)$ and $ u^0 \in K$, for $n=1,2,...$ find $u=u^n \in K$ such that
\begin{equation*}
\rho (u-u^{n-1}, \eta-u) + (p^{n-1}, \eta-u) + \sigma_1 \eps (\nabla u, \nabla \eta- \nabla u) - \frac{\sigma_1}{\eps}(u, \eta-u) \geq 0 
\end{equation*}
for all $\eta \in K$, where $p^{n-1} \in H^1(\Omega)$ solves
\begin{equation*}
\begin{aligned}
\alpha (\nabla p^{n-1}, \nabla \eta) + (p^{n-1}, \eta) &= (y^{n-1}-y_d, \eta) \quad \forall \eta \in H^1(\Omega),
\end{aligned}
\end{equation*}
and $y^{n-1}$ solves the weak form of (\ref{eqn:ghk}) with $u=u^{n-1}$.
\end{example}

%$p^{n-1}$ is related to the adjoint for the discretisation of an optimal control problem.

\section{Gradient flow}
\label{chap:timedisc}

In this section we investigate the gradient flow method for finding critical points of (\ref{eqn:PF}) and (\ref{eqn:PFO}) from an initial guess $u_0$. We prove that this method has some desirable properties, and note the link the to iterative method of the previous sections.

%but we do this outside of the abstract framework as we need to use some specific properties of $F_1$ and $F_2$ in our proofs.

\subsubsection*{Smooth double well potential}

Let $u_0$ denote the initial guess of the solution and consider the $L^2$ gradient flow of $F_1$ in (\ref{eqn:PF}). 

\begin{problem}
\label{prob:dwgf}
Given $y_d \in L^2(\Omega)$ and $ u_0 \in H^1(\Omega)$, find $u \in L^2(0,T;H^1(\Omega))$ with weak time derivative $\partial_t u \in L^2(0,T;L^2(\Omega))$ such that $u(0) = u_0$ and
\begin{equation}
\label{eqn:jxx}
(\partial_t u(t), \eta) + (S^*(Su(t)-y_d), \eta) + \sigma_1 \eps (\nabla u(t), \nabla\eta) + \frac{\sigma_1}{\eps}(\Psi_1'(u(t)), \eta) = 0
%(\partial_t u(t) + F_1'(u(t)), \eta) = 0
\end{equation}
for all $\eta \in H^1(\Omega)$ and almost all $t \in (0,T)$.
\end{problem}

\begin{theorem}
Problem \ref{prob:dwgf} has a unique solution.
\end{theorem}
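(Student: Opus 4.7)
The plan is the standard Faedo--Galerkin and energy method for semilinear parabolic problems. Since the only truly nonlinear feature is the cubic term $\Psi_1'(u) = u^3 - u$, which is locally Lipschitz and satisfies the monotonicity $(u_1^3 - u_2^3)(u_1 - u_2) \geq 0$ pointwise, both existence and uniqueness should follow without serious difficulty; the main obstacle is extracting enough strong compactness to pass to the limit in the cubic term. First I would choose an orthonormal basis $\{w_k\}_{k \geq 1}$ of $L^2(\Omega)$ consisting of eigenfunctions of the Neumann Laplacian, which is automatically orthogonal in $H^1(\Omega)$. Setting $V_m := \mathrm{span}\{w_1,\ldots,w_m\}$, I look for
\[
u_m(t) = \sum_{k=1}^m c_k^m(t) w_k
\]
satisfying (\ref{eqn:jxx}) for all $\eta \in V_m$, with $u_m(0)$ the $L^2$-projection of $u_0$ onto $V_m$. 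This yields an ODE system in $(c_k^m)$ with polynomially nonlinear right-hand side, so Picard--Lindel\"of gives a unique local solution.

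Next I would derive the basic energy identity. Testing against $\eta = \partial_t u_m(t) \in V_m$ yields
\[
\|\partial_t u_m(t)\|_{L^2(\Omega)}^2 + \frac{\rd}{\rd t} F_1(u_m(t)) = 0.
\]
Integrating in time and using the coercivity bound (\ref{eqn:bndbelow}) verified for $I = F_1$ in the discussion preceding (\ref{eqn:PF}), I obtain uniform bounds of $u_m$ in $L^\infty(0,T;H^1(\Omega))$ and of $\partial_t u_m$ in $L^2(0,T;L^2(\Omega))$. These rule out finite-time blow-up and give global existence of $u_m$ on $[0,T]$.

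To pass to the limit, the key step is the cubic term. By the Aubin--Lions lemma applied to the triple $H^1(\Omega) \hookrightarrow\hookrightarrow L^4(\Omega) \hookrightarrow L^2(\Omega)$ (valid for $N \leq 3$), the bounds above yield, up to a subsequence, $u_m \rightharpoonup u$ in $L^2(0,T;H^1(\Omega))$, $\partial_t u_m \rightharpoonup \partial_t u$ in $L^2(0,T;L^2(\Omega))$, and $u_m \to u$ strongly in $L^2(0,T;L^4(\Omega))$. The last convergence implies $u_m^3 \to u^3$ in $L^2(0,T;L^{4/3}(\Omega))$, which suffices to pass to the limit against any fixed $\eta \in V_m$; the linear terms pass using weak convergence together with continuity of $S$ and $S^*$ on $L^2(\Omega)$. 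A density argument then extends (\ref{eqn:jxx}) to all $\eta \in H^1(\Omega)$, and the attainment of the initial datum follows from $u \in C([0,T];L^2(\Omega))$ together with $u_m(0) \to u_0$ in $L^2(\Omega)$.

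For uniqueness, let $u_1, u_2$ be two solutions and set $w := u_1 - u_2$, so $w(0) = 0$. Subtracting the two equations and testing with $\eta = w$ gives
\[
\tfrac{1}{2} \tfrac{\rd}{\rd t} \|w\|_{L^2(\Omega)}^2 + \|Sw\|_{L^2(\Omega)}^2 + \sigma_1 \eps \|\nabla w\|_{L^2(\Omega)}^2 + \tfrac{\sigma_1}{\eps} \bigl( u_1^3 - u_2^3 , w \bigr) = \tfrac{\sigma_1}{\eps} \|w\|_{L^2(\Omega)}^2 .
\]
The factorisation $u_1^3 - u_2^3 = (u_1^2 + u_1 u_2 + u_2^2) w$ with $u_1^2 + u_1 u_2 + u_2^2 = (u_1 + u_2/2)^2 + 3 u_2^2 / 4 \geq 0$ makes the cubic contribution nonnegative, so
\[
\tfrac{\rd}{\rd t} \|w\|_{L^2(\Omega)}^2 \leq \tfrac{2 \sigma_1}{\eps} \|w\|_{L^2(\Omega)}^2 ,
\]
and Gr\"onwall combined with $w(0)=0$ gives $w \equiv 0$.
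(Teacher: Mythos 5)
Your proof is correct and is precisely the standard Faedo--Galerkin/energy-estimate/Aubin--Lions argument for Allen--Cahn type equations with a smooth potential that the paper invokes by reference rather than writing out (its proof simply points to the literature cited in Theorem \ref{thm:pew}, together with the energy computation it repeats in Theorem \ref{thm:endec}). The only items worth making explicit are routine: the uniform bound on $F_1(u_m(0))$, which holds because the spectral projection converges to $u_0$ in $H^1(\Omega)\hookrightarrow L^4(\Omega)$, and the identity $\tfrac{\rd}{\rd t}\norm{w}_{L^2(\Omega)}^2 = 2(\partial_t w, w)$ used in the uniqueness step, which is justified since $w\in L^2(0,T;H^1(\Omega))$ with $\partial_t w \in L^2(0,T;L^2(\Omega))$.
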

\begin{proof}
Note that Problem \ref{prob:dwgf} is very similar to the Allen-Cahn equation with the smooth double well potential, and the proof follows using standard techniques. See for example the references in Theorem \ref{thm:pew}, where existence and uniqueness is proved for smooth potentials in order to show existence and uniqueness for the double obstacle potential in the limit.
\end{proof}

\begin{theorem}
\label{thm:endec}
If $u$ is a sufficiently smooth solution of Problem \ref{prob:dwgf} then the energy $F_1(u(t))$ decreases over time.
\end{theorem}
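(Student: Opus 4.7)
The plan is to differentiate the energy $F_1(u(t))$ in time and then substitute the gradient flow equation to recognize the time-derivative as a nonpositive quantity. Since the statement assumes $u$ is sufficiently smooth, I can freely differentiate under the integral and move $\partial_t$ past the spatial derivatives.

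First I would compute, using the chain rule and the symmetry/continuity of $S$ and $\nabla$,
\begin{align*}
\frac{\rd}{\rd t} F_1(u(t)) &= (Su(t)-y_d, S\partial_t u(t)) + \sigma_1 \eps (\nabla u(t), \nabla \partial_t u(t)) + \frac{\sigma_1}{\eps}(\Psi_1'(u(t)), \partial_t u(t)) \\
&= (S^*(Su(t)-y_d), \partial_t u(t)) + \sigma_1 \eps (\nabla u(t), \nabla \partial_t u(t)) + \frac{\sigma_1}{\eps}(\Psi_1'(u(t)), \partial_t u(t)),
\end{align*}
where in the first term I have used the definition of the adjoint $S^*$ to move the operator onto $\partial_t u$.

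Next I would test the weak formulation (\ref{eqn:jxx}) with the admissible choice $\eta = \partial_t u(t) \in H^1(\Omega)$ (valid by the assumed smoothness of $u$). This gives
\[
\norm{\partial_t u(t)}^2_{L^2(\Omega)} + (S^*(Su(t)-y_d), \partial_t u(t)) + \sigma_1 \eps (\nabla u(t), \nabla \partial_t u(t)) + \frac{\sigma_1}{\eps}(\Psi_1'(u(t)), \partial_t u(t)) = 0.
\]
Comparing with the expression for $\frac{\rd}{\rd t} F_1(u(t))$ above, the three $b$-, $c$-, and $l$-type terms are identical, so I conclude
\[
\frac{\rd}{\rd t} F_1(u(t)) = -\norm{\partial_t u(t)}^2_{L^2(\Omega)} \leq 0
\]
for almost every $t \in (0,T)$, and integrating in time yields monotone decay of $F_1(u(t))$.

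The only subtle point is the interchange of $\frac{\rd}{\rd t}$ with the integrals in $F_1$, in particular the nonlinear term $\int_\Omega \Psi_1(u)$; the phrase ``sufficiently smooth'' in the hypothesis is precisely what lets me apply dominated convergence (or the classical chain rule for $C^1$ functions on a Hilbert space) here, so no further work is needed. Everything else is a routine calculation.
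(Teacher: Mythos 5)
Your proposal is correct and follows essentially the same argument as the paper: test (\ref{eqn:jxx}) with $\eta = \partial_t u(t)$ and identify the resulting terms as $\tfrac{\rd}{\rd t}F_1(u(t))$, yielding $\tfrac{\rd}{\rd t}F_1(u(t)) = -\norm{\partial_t u(t)}_{L^2(\Omega)}^2 \leq 0$. The only difference is presentational (you differentiate the energy first and then substitute, whereas the paper tests first and then recognises the time derivatives), which changes nothing of substance.
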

\begin{proof}
For some $t \in (0,T)$ we can test (\ref{eqn:jxx}) with $\eta = \partial_t u(t)$ to get
\begin{equation}
\norm{\partial_t u(t)}_{L^2(\Omega)}^2 + (S^*(Su(t)-y_d), \partial_t u(t)) + \sigma_1 \eps (\nabla u(t), \nabla \partial_t u(t)) + \frac{\sigma_1}{\eps} (\Psi_1'(u(t)), \partial_t u(t)) = 0.
\label{eqn:earlier}
\end{equation}
Note that
\begin{align*}
(S^*(Su(t)-y_d),\partial_t u(t)) &= \frac{1}{2} \frac{\rd}{\rd t}  \norm{Su(t)-y_d}_{L^2(\Omega)}^2, \\
(\nabla u(t), \nabla \partial_t u(t)) &= \frac{1}{2} \frac{ \rd}{\rd t} \norm{\nabla u(t)}_{L^2(\Omega)}^2, \\ (\Psi_1'(u(t)), \partial_t u(t)) &= \frac{ \rd}{\rd t} \int_\Omega \Psi_1(u(t)),
\end{align*}
so equation (\ref{eqn:earlier}) is equivalent to
\[
\norm{\partial_t u(t)}_{L^2(\Omega)}^2 + \frac{\rd}{\rd t} \Big( \frac{1}{2} \norm{Su(t)-y_d}_{L^2(\Omega)}^2 + \frac{\sigma_1 \eps}{2} \norm{ \nabla u(t)}_{L^2(\Omega)}^2+ \frac{\sigma_1}{\eps} \int_\Omega \Psi_1(u(t)) \Big) = 0.
\]
%  passing to the limit we get that
% \[
% \norm{\partial_t u(t)}_{L^2(\Omega)}^2 + \frac{\rd}{\rd t} F_1(u(t)) \leq 0
% \]
% for the $u$ solving Problem \ref{prob:dwgf}. We have sufficient regularity that $u \in C(0,T;H^1(\Omega))$, so $F_1(u(t))$ is continuous in $t$. Therefore
Therefore as long as $\partial_t u(t)$ is not zero almost everywhere we have
\[
0 > -  \norm{\partial_t u(t)}_{L^2(\Omega)}^2 \geq \frac{\rd}{\rd t} F_1(u(t)),
\]
and hence the energy decreases.
\end{proof}

\subsubsection*{Double obstacle potential}

We can formulate a gradient flow for $F_2$ from (\ref{eqn:PFO}) in a similar way.

\begin{problem}
\label{prob:optf}
Given $y_d \in L^2(\Omega)$ and $ u_0 \in H^1(\Omega)$, find $u \in K_T$ with $\partial_t u \in L^2(0,T;L^2(\Omega))$ such that $u(0) = u_0$ and
\begin{equation}
\label{eqn:knb}
\begin{aligned}
(\partial_t u(t), \eta-u(t)) + (S^*(Su(t)-y_d), \eta-u(t)) &+ \sigma_2 \eps (\nabla u(t), \nabla\eta- \nabla u(t)) \\
 &- \frac{\sigma_2}{\eps}(u(t), \eta-u(t)) \geq 0
\end{aligned}
\end{equation}
for all $\eta \in K$ and almost all $t \in (0,T)$. Here 
\[
K_T := \{u \in L^2(0,T;H^1(\Omega)) : \abs{u} \leq 1 \text{ a.e. in }(0,T) \times  \Omega \}.
\]
%where $\Omega_T := \Omega \times (0,T)$.
\end{problem}

\begin{theorem}
\label{thm:pew}
Problem \ref{prob:optf} %and Problem \ref{prob:kiji} have
has a unique solution. Moreover, if $u$ is a sufficiently smooth solution then the energy $F_2(u(t))$ decreases over time.
\end{theorem}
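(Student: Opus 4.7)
My plan is to handle the three claims of the theorem in turn, leaning on the smooth gradient flow theory already cited for Problem \ref{prob:dwgf}. For existence I would follow the strategy suggested in the statement and approximate the double obstacle potential $\Psi_2$ by a smooth family, for example the penalised potentials
\begin{equation*}
\Psi_2^\delta(u) := \tfrac{1}{2}(1-u^2) + \tfrac{1}{\delta}\bigl((u-1)_+^2 + (u+1)_-^2\bigr),
\end{equation*}
for which each regularised gradient flow is of the form of Problem \ref{prob:dwgf} and has a unique solution $u^\delta$. Testing the regularised equation with $\partial_t u^\delta$ gives the usual energy identity, and hence $\delta$-uniform bounds: $u^\delta$ in $L^\infty(0,T;H^1(\Omega))$, $\partial_t u^\delta$ in $L^2(0,T;L^2(\Omega))$, and $\|(|u^\delta|-1)_+\|_{L^2((0,T)\times\Omega)}^2 = O(\delta)$. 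Weak compactness combined with Aubin--Lions produces a limit $u \in K_T$ with $\partial_t u \in L^2(0,T;L^2)$ and $|u| \leq 1$ almost everywhere; lower semicontinuity of the convex quadratic terms plus strong $L^2$ convergence are enough to pass to the limit in the penalised weak form and recover (\ref{eqn:knb}). This is the programme executed by Blowey and Elliott in \cite{Elliott1992, Elliott1992b}.

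For uniqueness I would take two solutions $u_1, u_2$, test (\ref{eqn:knb}) written for $u_1$ with $\eta = u_2(t)$ and the one for $u_2$ with $\eta = u_1(t)$, and add the resulting inequalities. The cross terms collapse into
\begin{equation*}
-\tfrac{1}{2}\tfrac{d}{dt}\|u_1-u_2\|_{L^2}^2 \geq \|S(u_1-u_2)\|_{L^2}^2 + \sigma_2 \eps\|\nabla(u_1-u_2)\|_{L^2}^2 - \tfrac{\sigma_2}{\eps}\|u_1-u_2\|_{L^2}^2,
\end{equation*}
so dropping the two non-negative terms on the right gives $\tfrac{d}{dt}\|u_1-u_2\|_{L^2}^2 \leq (2\sigma_2/\eps)\|u_1-u_2\|_{L^2}^2$, and Gronwall with the common initial value $u_0$ yields $u_1 \equiv u_2$.

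For the energy decay, under the assumed smoothness of $u$, I would fix $t \in (0,T)$ and study the scalar function
\begin{equation*}
f(s) := (\partial_t u(t)+S^*(Su(t)-y_d), u(s)-u(t)) + \sigma_2 \eps (\nabla u(t), \nabla u(s)-\nabla u(t)) - \tfrac{\sigma_2}{\eps}(u(t), u(s)-u(t)).
\end{equation*}
Since $u(s) \in K$ for every $s$ close to $t$, (\ref{eqn:knb}) applied at time $t$ gives $f(s) \geq 0$, while $f(t)=0$, so $t$ is a local minimum of $f$ and $f'(t)=0$. Differentiating and rearranging produces the identity $\|\partial_t u(t)\|_{L^2}^2 + \tfrac{d}{dt}F_2(u(t)) = 0$, from which $F_2(u(\cdot))$ is non-increasing, mirroring the conclusion of Theorem \ref{thm:endec}.

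The existence step is where the real work lies: one has to choose a regularisation that is smooth enough for classical parabolic theory and then control $u^\delta$ uniformly in $\delta$ tightly enough to pass to the limit in the variational inequality while preserving the obstacle constraint $|u| \leq 1$. Uniqueness, by contrast, is a routine Gronwall computation once the correct mutual testing is identified, and the energy decay is an immediate consequence of the variational inequality via the scalar-minimisation trick above.
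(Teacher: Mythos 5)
Your proposal is correct and follows essentially the same route as the paper, whose proof simply defers to the Blowey--Elliott penalisation arguments for the double obstacle Allen--Cahn inequality (\cite{Elliott1992b} and related works) ``modified to allow for the $S^*Su$ term''; your penalised potential $\Psi_2^\delta$, the $\delta$-uniform energy bounds, the Aubin--Lions passage to the limit, the mutual-testing Gronwall argument for uniqueness, and the $f'(t)=0$ derivation of $\norm{\partial_t u}_{L^2}^2 + \frac{\rd}{\rd t}F_2(u(t))=0$ are exactly the ingredients of that programme. You in fact supply more detail than the paper does, and the extra term $S^*Su$ causes no difficulty in any of your three steps since it contributes the non-negative quantity $\norm{S(u_1-u_2)}_{L^2}^2$ in the uniqueness estimate and the exact derivative $\frac{1}{2}\frac{\rd}{\rd t}\norm{Su-y_d}_{L^2}^2$ in the energy identity.
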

\begin{proof}
This follows from a slight modification to the arguments for the double obstacle Allen-Cahn inequality in \cite{Blank2009, ChenElliott, Blank2009a, Curvature, Elliott1992b} to allow for the $S^*Su$ term.
\end{proof}

For both potentials it is important to consider whether $u(t)$ converges to a steady state as $t \to \infty$. These types of issues are investigated in \cite{Hale1988}, and in \cite{ChenElliott} for the 1D double obstacle potential. We do not discuss this as the focus of this work is on iterative methods.

\subsection{Link to iterative methods}

Particular first order discretisations in time of the gradient flow formulations are equivalent to the iterative methods of the previous section with $\rho = \frac{1}{\Delta t}$. But we only want to solve the optimisation problems (\ref{eqn:PF}) and (\ref{eqn:PFO}); we are not interested in the accuracy of solutions to (\ref{eqn:jxx}) and (\ref{eqn:knb}) at each point in time, but rather how well they approximate minimisers of $F_1$ and $F_2$ for large $t$. For this reason our method for solving (\ref{eqn:PF}) and (\ref{eqn:PFO}) should focus on decreasing the energy. The iterative methods of the previous sections are designed to have this property, where as discretisations in time of the gradient flows may not.

The scheme denoted by (\ref{eqn:A1}) of Barrett and Elliott motivated the convexity splitting implicit/explicit Euler scheme used in \cite{ElliottStuart1993}. See also \cite{Eyre1998}.

\section{Discretisation}

\label{chap:discretisation}

In this section we discretise the abstract iterative method of Section \ref{sec:gen} in space and analyse convergence of the discretisation. We then apply this theory to a finite element discretisation of (\ref{eqn:lala}) and (\ref{eqn:do2}) for $S$ defined by \ref{eqn:ghk}.

\subsection{Discrete abstract framework} 

Suppose we have a family of subspaces $V_h \subset V$ and closed convex nonempty subsets $W_h \subset V_h$ which approximate functions in $W$ increasingly well as some parameter $h \to 0$. In particular we suppose we have an approximation operator $P_h:W \to W_h$ such that
\begin{align} \label{eqn:interpol}
\norm{\eta - P_h\eta}_V \to 0 \text{ as } h \to 0 \quad \forall \eta \in W,
\end{align}
and that every sequence $\{\eta_h\} \subset W_h$ satisfies
\begin{equation} \label{eqn:xmxmx}
\eta_h \rightharpoonup \eta \text{ in } V \text{ as } h \to 0 \implies \eta \in W.
\end{equation}
\begin{remark} \label{rem:kkk}
Note that we do not require $W_h \subset W$. If this holds then (\ref{eqn:xmxmx}) follows automatically because $W$ is a closed convex subset of a Banach space, and hence is weakly sequentially closed.
\end{remark}

We now assume there exist objects $b_h$, $c_h$ and $l_h$ which satisfy the same assumptions as $b$, $c$ and $l$, with the boundedness and coercivity constants independent of $h$. We define
\begin{equation*}
I_h(\eta):=\frac{1}{2} b_h(\eta,\eta) +  J(\eta) - \frac{1}{2} c_h(\eta,\eta) - l_h(\eta),
\end{equation*}
and as in (\ref{eqn:bndbelow}) we assume that there exist positive constants $\alpha_1$ and $C_1$ independent of $h$ such that
\begin{equation}
\label{eqn:below}
I_h(\eta_h) \geq \alpha_1 \norm{\eta_h}_V^2 - C_1 \quad \forall \eta_h \in W_h.
\end{equation}
So minimisers of $I_h$ over $W_h$ (which exist, since $I_h$ satisfies the same assumptions as $I$) satisfy the following discrete problem: Find $u_h \in W_h$ such that
\begin{equation}
\label{eqn:Q1h}
b_h(u_h, \eta_h - u_h) + J(\eta_h) - J(u_h) \geq c_h(u_h, \eta_h-u_h) + l_h(\eta_h-u_h) \quad \forall \eta_h \in W_h. %\tag{Qh}
\end{equation}
If $J$ is in addition G\^{a}teaux differentiable then this is equivalent to the following discrete variational inequality: Find $u_h \in W_h$ such that
\begin{equation*}
%\label{eqn:Q2h}
b_h(u_h, \eta_h - u_h) + \langle J'(u_h),\eta_h-u_h \rangle \geq c_h(u_h, \eta_h-u_h) + l_h(\eta_h-u_h) \quad \forall \eta_h \in W_h. %\tag{$\mathrm{Qh}'$}
\end{equation*}

We need $b_h$, $c_h$ and $l_h$ to approximate their continuous counterparts as $h \to 0$. So we make the additional assumptions that for any bounded sequence $\{v_h\}\subset W$ we have
\begin{align}
&\norm{(b-b_h)(v_h,\cdot)}_{V^*} = \sup_{\eta_h \in V_h \setminus \{0\}} \frac{\abs{b(v_h,\eta_h)-b_h(v_h,\eta_h)}}{\norm{\eta_h}_V} \to 0, \label{eqn:fgfgf} \\
&\norm{(c-c_h)(v_h,\cdot)}_{V^*} = \sup_{\eta_h \in V_h \setminus \{0\}} \frac{\abs{c(v_h,\eta_h)-c_h(v_h,\eta_h)}}{\norm{\eta_h}_V} \to 0,  \nonumber\\
&\norm{l-l_h}_{V^*} = \sup_{\eta_h \in V_h \setminus \{0\}} \frac{\abs{l(\eta_h)-l_h(\eta_h)}}{\norm{\eta_h}_V } \to 0 \nonumber
\end{align}
as $h \to 0$. With these assumptions solutions of the discrete variational inequality (\ref{eqn:Q1h}) approximate solutions of the continuous variational inequality (\ref{eqn:Q1}) as $h \to 0$, as the following theorem shows.

\begin{theorem}
\label{thm:strang}
For any sequence $h_n \to 0$ the sequence $\{u_{h_n}\}$ of solutions to (\ref{eqn:Q1h}) has a subsequence which converges weakly in $V$, and hence strongly in $H$, to a critical point of (\ref{eqn:P}) i.e. a solution of  (\ref{eqn:Q1}). Moreover, the limit of any subsequence of $\{u_{h_n}\}$ that converges weakly in $V$, and hence strongly in $H$, is a critical point of (\ref{eqn:P}).
\end{theorem}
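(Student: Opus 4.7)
The plan is to mirror the compactness plus pass-to-the-limit argument used in Theorem \ref{thm:itss}, with the discrete-to-continuous approximation properties (\ref{eqn:interpol}), (\ref{eqn:xmxmx}) and (\ref{eqn:fgfgf}) playing the role that strong $H$-convergence played there. The first step is to establish a uniform bound $\|u_{h_n}\|_V \leq C$. For this I would fix any $\eta_0 \in W$ and exploit the convex--concave splitting used to derive (\ref{eqn:Q1h}): solutions of (\ref{eqn:Q1h}) are naturally obtained as minimisers of $I_h$ over $W_h$ (which exist by the same argument as for $I$ over $W$), and so satisfy $I_h(u_h) \leq I_h(P_h\eta_0)$. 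Using (\ref{eqn:interpol}) gives $P_h\eta_0 \to \eta_0$ strongly in $V$, whence the $V$-continuity of $b$, $c$, $l$ combined with the operator-norm estimates (\ref{eqn:fgfgf}) shows $I_h(P_h\eta_0) \to I(\eta_0) < \infty$. Combined with (\ref{eqn:below}) this yields the desired uniform $V$-bound.

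Next, by reflexivity of $V$ I extract a subsequence (still labelled $\{u_{h_n}\}$) with $u_{h_n} \rightharpoonup u$ in $V$, and the compact embedding $V \hookrightarrow H$ upgrades this to strong convergence in $H$. Assumption (\ref{eqn:xmxmx}) then forces $u \in W$, so $u$ is admissible as a critical point of (\ref{eqn:P}).

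The main step is to pass to the limit in (\ref{eqn:Q1h}) with $\eta_h = P_h\eta$ for an arbitrary fixed $\eta \in W$, after rewriting it as
\[
b_h(u_h, P_h\eta) - b_h(u_h, u_h) + J(P_h\eta) - J(u_h) - c_h(u_h, P_h\eta) + c_h(u_h, u_h) - l_h(P_h\eta - u_h) \geq 0.
\]
The terms linear in $u_h$, namely $b_h(u_h, P_h\eta)$, $c_h(u_h, P_h\eta)$, $l_h(P_h\eta - u_h)$ and $J(P_h\eta)$, all converge to their continuous counterparts by combining weak $V$-convergence of $u_{h_n}$, strong $V$-convergence $P_h\eta \to \eta$, continuity of $J$, and the approximation estimates (\ref{eqn:fgfgf}) applied to the uniformly bounded sequence $\{u_{h_n}\}$. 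For the quadratic term $b_h(u_h, u_h)$ I would write $b_h(u_h, u_h) = b(u_h, u_h) + (b_h - b)(u_h, u_h)$, use (\ref{eqn:fgfgf}) to kill the second piece, and apply weak lower semicontinuity of the convex continuous form $v \mapsto b(v,v)$ to obtain $\liminf b_h(u_{h_n}, u_{h_n}) \geq b(u,u)$; similarly $\liminf J(u_{h_n}) \geq J(u)$ by lower semicontinuity of $J$. The term $c_h(u_h, u_h)$ in contrast converges to $c(u, u)$ because $c$ is continuous on $H \times H$ and $u_{h_n} \to u$ strongly in $H$. Taking $\limsup$ of the displayed inequality and distributing it over the sum, the two $\liminf$ bounds appear with negative sign and cooperate with the direction of the remaining equalities to produce precisely $b(u, \eta - u) + J(\eta) - J(u) \geq c(u, \eta - u) + l(\eta - u)$, which is (\ref{eqn:Q1}) for arbitrary $\eta \in W$. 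The second assertion follows from exactly the same argument applied to any other weakly $V$-convergent subsequence of $\{u_{h_n}\}$; no further extraction is needed because the weak limit is already assumed.

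The main obstacle I expect is the sign book-keeping in the final $\limsup$ step: the convex terms $b_h(u_h, u_h)$ and $J(u_h)$ supply only $\liminf$ bounds, and one must verify they enter the rearranged inequality with the negative sign so that $\limsup$ of a sum is correctly controlled by the sum of $\limsup$s. A secondary technicality is verifying the convergence $I_h(P_h\eta_0) \to I(\eta_0)$ used in the boundedness step, which requires applying (\ref{eqn:fgfgf}) to the \emph{sequence} $\{P_h\eta_0\}$ (not a single function) and exploiting its $V$-boundedness.
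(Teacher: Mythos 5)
Your proposal is correct and follows essentially the same route as the paper: uniform boundedness via $I_h(u_h)\leq I_h(P_h\eta_0)$ together with (\ref{eqn:below}), extraction of a weak limit lying in $W$ by (\ref{eqn:xmxmx}), and passage to the limit in (\ref{eqn:Q1h}) using (\ref{eqn:interpol}), (\ref{eqn:fgfgf}), weak lower semicontinuity of $\eta\mapsto b(\eta,\eta)$ and of $J$, and strong $H$-convergence for the $c$ term. The only cosmetic differences are that you expand the bilinear forms termwise where the paper keeps $b(u_{h_n},P_{h_n}\eta-u_{h_n})$ intact, and that you prove convergence of $I_h(P_h\eta_0)$ where the paper only needs its boundedness; your sign bookkeeping in the $\limsup$ step checks out.
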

\begin{proof}
For a given $h$ we can find $u_h = \argmin_{\eta_h \in W_h } I_h(\eta_h)$, then for any $\eta_h \in W_h$, 
\[
I_h(u_h) \leq I_h(\eta_h) = \frac{1}{2} b_h(\eta_h,\eta_h) + J(\eta_h) - \frac{1}{2} c_h(\eta_h,\eta_h) - l_h(\eta_h).
\]
Fix $\eta \in W$ and set $\eta_h=P_h \eta \in W_h$. So $\{\eta_h\}$ is bounded in $V$ by (\ref{eqn:interpol}), which means $\abs{b_h(\eta_h,\eta_h) - b(\eta_h,\eta_h)} \leq \norm{(b_h-b)(\eta_h,\cdot)}_{V^*} \norm{\eta_h}_V \leq C$. Here and throughout this section $C$ denotes a generic constant independent of $h$ which may vary from line to line. A similar result holds for $l_h$, and $c_h$ is nonnegative, so
\[
I_h(u_h) \leq \frac{1}{2}b(\eta_h,\eta_h) + J(\eta_h) + \abs{l(\eta_h)}  + C.
\]
By the boundedness of $b$ and $l$,
\[
I_h(u_h) \leq C(\norm{\eta_h}_V^2 + J(\eta_h) + \norm{\eta_h}_V).
\]
Combining this with (\ref{eqn:below}) we get
\[
\norm{u_h}_V \leq C(\norm{\eta_h}_V + J(\eta_h) + 1).
\]
Now (\ref{eqn:interpol}) and the continuity of $J$ give that $J(\eta_h) \leq C$. In addition (\ref{eqn:interpol}) implies that for $h$ less than some $h_0$, $\norm{\eta_h}_V \leq \norm{\eta}_V+C$, and therefore $\norm{u_h}_V \leq C$.

From the above it follows that for any sequence $h_n \to 0$, $\{u_{h_n}\}$ is bounded in $V$. So we can find a subsequence, which we also denote by $\{u_{h_n}\}$, that converges weakly in $V$ and strongly in $H$ to some $u \in V$. In fact $u \in W$ by (\ref{eqn:xmxmx}). We now show that $u$ is a solution of (\ref{eqn:Q1}). 

Note that for all $\eta \in W$ we have
\begin{align*}
\liminf_{n \to \infty} b_{h_n}(u_{h_n},P_{h_n}\eta - u_{h_n}) &= \liminf_{n \to \infty} \Big ( b_{h_n}(u_{h_n},P_{h_n}\eta - u_{h_n}) \pm b(u_{h_n},P_{h_n}\eta - u_{h_n}) \pm b(u_{h_n},\eta - u_{h_n}) \Big ) \\
&= \liminf_{n \to \infty} \Big ( (b_{h_n}-b) (u_{h_n},P_{h_n} \eta - u_{h_n}) + b(u_{h_n},P_{h_n}\eta - \eta) + b(u_{h_n}, \eta-u_{h_n}) \Big ) \\
&= \liminf_{n \to \infty} b(u_{h_n}, \eta - u_{h_n}) \\
&\leq b(u,\eta-u).
\end{align*}
The final equality follows because $\lim_{n \to \infty} (b_{h_n}-b) (u_{h_n},P_{h_n} \eta - u_{h_n})=0$ by (\ref{eqn:fgfgf}) and $\lim_{n\to \infty} b(u_{h_n},P_{h_n}\eta - \eta) =0$ by (\ref{eqn:interpol}). The inequality follows from the lower semicontinuity of $b(\cdot,\cdot)$ and the continuity of $b(\cdot,\eta)$. Similar results hold for the $c_h$ and $l_h$ terms. This and the continuity and weak lower semicontinuity of $J$ gives
\begin{align*}
b(u,\eta-u) + J(\eta)-J(u) & \geq \liminf_{n \to \infty} \Big (b_{h_n}(u_{h_n},P_{h_n} \eta-u_{h_n}) + J(P_{h_n}\eta) - J(u_{h_n}) \Big ) \\
&\geq \liminf_{n \to \infty} \Big ( c_{h_n}(u_{h_n},P_{h_n} \eta- u_{h_n}) + l_{h_n} ( P_{h_n} \eta - u_{h_n}) \Big ) \\
&\geq c(u,\eta - u) + l(\eta - u) \quad \forall \eta \in W.          
\end{align*}
Hence $u$ is indeed a solution of (\ref{eqn:Q1}). 

The same argument applies to any weakly convergent subsequence, which proves the second part of the theorem.
\end{proof}

\begin{remark}
We could also assume we have functionals $J_h$ satisfying the same assumptions as $J$, with the continuity independent of $h$, plus the additional property that 
%for any bounded sequence $\{v_h\} \subset W$ we have 
% \[
% \norm{\langle J'(v_h)-J'_h(v_h), \cdot \rangle }_{V^*} = \sup_{\eta_h \in V_h \setminus \{0\} } \frac{\abs{\langle J'(v_h),\eta_h \rangle - \langle  J_h'(v_h),\eta_h \rangle } }{\norm{\eta_h}_V} \to 0
% \]
$v_{h_n} \rightharpoonup v$ in $W$ for $h_n \to 0$ implies $\liminf_{n \to \infty} J_{h_n}(v_{h_n}) \geq J(v)$. Then a proof almost identical to the above gives convergence for (\ref{eqn:Q1h}) with $J$ replaced by $J_h$. This allows numerical integration to be used on the $J$ term.
\end{remark}

As with (\ref{eqn:Q1}) in Section \ref{sec:gen}, we can consider an iterative method for solving (\ref{eqn:Q1h}): Given $u_h^0 \in W_h$, for $n=1,2,...$ find $u_h^n \in W_h$ such that
\begin{equation*}
%\label{eqn:A1ah}
b_h(u_h^n, \eta_h-u_h^n) +J(\eta_h) - J(u_h^n) \geq c_h(u_h^{n-1}, \eta_h-u_h^n) + l_h(\eta_h-u_h^n) \quad \forall \eta_h \in W_h. %\tag{Ah}
\end{equation*}
If $J$ is in addition G\^{a}teaux differentiable then this is equivalent to the following iterative method: Given $u_h^0 \in W_h$, for $n=1,2,...$ find $u_h^n \in W_h$ such that
\begin{equation*}
%\label{eqn:A2ah}
b_h(u_h^n, \eta_h-u_h^n) + (J'(u_h^n),\eta_h-u_h^n) \geq c_h(u_h^{n-1}, \eta_h-u_h^n) + l_h(\eta_h-u_h^n) \quad \forall \eta_h \in W_h. %\tag{$\mathrm{Ah}'$}
\end{equation*}

Since $b_h$, $c_h$ and $l_h$ satisfy the same assumptions as $b$, $c$, and $l$, the above iterative method still has the energy decreasing property, and we get convergence of iterates to a solution of (\ref{eqn:Q1h}). Then as $h \to 0$ the solutions of (\ref{eqn:Q1h}) converge to critical points of (\ref{eqn:P}) by Theorem \ref{thm:strang}. 

\subsection{Finite element discretisation of (\ref{eqn:lala}) and (\ref{eqn:do2})}
\label{sec:discmthds}

Assume that $\Omega$ is polyhedral and let $\{T_h\}$ be a family of uniform regular triangulations of $\Omega$ into disjoint open simplices with a maximal element size $h$. Associated with each $T_h$ we have the piecewise linear finite element space
\[
V_h := \{ v \in C^0(\bar{\Omega}) : v \! \mid_T \in P_1(T) \text{ for all } T \in T_h \} \subset H^1(\Omega),
\]
where $P_1(T)$ is the set of all linear affine functions on $T$. Also define
\[
K_h := \{v_h \in V_h : \abs{v_h} \leq 1 \text{ in } \Omega \}
\]
so that we have a finite element space analogous to $K$. Note that $K_h \subset K$ so Remark \ref{rem:kkk} applies. Take $P_h$ to be the operator that maps $u \in W$ to the unique $P_h u \in W_h$ such that
\[
(P_h u,\eta_h-u)_{H^1(\Omega)} \geq (u,\eta_h-u)_{H^1(\Omega)} \quad \forall \eta_h \in W_h.
\]
This operator satisfies equation \ref{eqn:interpol}, see e.g. Chapter 2 in \cite{RolandGlowinski1984}. %(\ref{eqn:interpol}). % (see Corollary 7.8 in \cite{braess}).

Let $S$ be the solution operator of (\ref{eqn:ghk}), and denote by $S_h$ the discrete blurring operator. We intend this to approximate $S$, so we define $S_h$ to map $u \in L^2(\Omega)$ to the unique $y_h \in V_h$ satisfying
\begin{equation}
 \alpha (\nabla y_h, \nabla \eta_h) + (y_h, \eta_h) = (u,\eta_h) \quad \forall \eta_h \in V_h.
\label{eqn:jduy}
\end{equation}
A stability estimate the same as (\ref{eqn:bnd}) holds, so
\begin{equation}
\label{eqn:bnd2}
\norm{y_h}_{L^2(\Omega)} = \norm{S_h u}_{L^2(\Omega)} \leq C_s(\alpha) \norm{u}_{L^2(\Omega)},
\end{equation}
where as before $C_s(\alpha) = \frac{1}{1+\alpha/C_p}$. Also standard error analysis for elliptic PDEs says  
\begin{equation*}
\norm{y-y_h}_{L^2(\Omega)} \leq C h \norm{y}_{H^1(\Omega)},
\end{equation*}
which combined with (\ref{eqn:bnd2}) gives that
\begin{equation}
\label{eqn:erroranal}
\norm{(S-S_h)u}_{L^2(\Omega)} \leq C h \norm{Su}_{H^1(\Omega)} \leq C h \norm{u}_{L^2(\Omega)}.
\end{equation}

\begin{example}[Smooth double well]
\label{ex:sdw22}
Take the same definitions as in Example \ref{ex:dwm2}. In addition take $V_h$ as above, $W_h := V_h$, and define
\begin{align*}
b_h(u_h,\eta_h) &:= \rho (u_h,\eta_h) + \sigma_1 \eps(\nabla u_h, \nabla \eta_h) - \frac{\sigma_1}{ \eps}(u_h, \eta_h)  \\
c_h(u_h,\eta_h) &:= \rho (u_h,\eta_h) -(S_h u_h,S_h \eta_h) \\
l_h(u_h) &:= (S_h^* y_{d,h}, u_h) \\
J(u_h) &:= \frac{\sigma_1}{4\eps} \int_\Omega u_h^4,
\end{align*}
where $S_h$ is the discrete elliptic operator defined by (\ref{eqn:jduy}), and $y_{d,h}$ is the $L^2$-projection of $y_d$ onto $V_h$. 

For $\rho > \max \{\frac{\sigma_1}{\eps},C_s^2\}$, where $C_s$ is the stability constant from (\ref{eqn:bnd2}), all the assumptions of Theorem \ref{thm:itss} are satisfied, so we get the decreasing energy property and convergence of iterates for the following discrete iterative method: Given $y_{d,h}$, $u_h^0 \in V_h$, for $n=1,2,...$ find $u_h=u_h^n \in V_h$ such that
\begin{equation*}
%\label{eqn:dvi2}
\begin{aligned}
\rho(u_h-u_h^{n-1}, \eta_h) &+ (p_h^{n-1}, \eta_h) + \sigma_1 \eps (\nabla u_h, \nabla \eta_h) \\
&+ \frac{\sigma_1}{\eps}(u_h^3-u_h, \eta_h) = 0 \quad \forall \eta_h \in V_h,
\end{aligned}
\end{equation*}
where $y_h^{n-1}$, $p_h^{n-1} \in V_h$ satisfy
\begin{equation*}
%\label{eqn:adjointtt}
\begin{aligned}
\alpha (\nabla y_h^{n-1}, \nabla \eta_h) + (y_h^{n-1}, \eta_h) &= (u_h^{n-1}, \eta_h) \\
\alpha (\nabla p_h^{n-1}, \nabla \eta_h) + (p_h^{n-1}, \eta_h) &= (y_h^{n-1}-y_{d,h}, \eta_h)
\end{aligned}
\end{equation*}
for all $\eta_h \in V_h$.

The assumptions of Theorem \ref{thm:strang} are also satisfied, since for a weakly convergent sequence $\{v_h\} \in V$ we have
\begin{align*}
\abs{(b_h-b)(v_h ,\eta_h)} & = \abs{(S_h v_h,S_h \eta_h)-(S v_h,S \eta_h)} \\
& \leq \abs{(S_h v_h, (S_h-S)\eta_h)} + \abs{((S_h-S) v_h, S\eta_h)}\\
&\leq \norm{S_hv_h}_{L^2(\Omega)}\norm{(S_h-S)\eta_h}_{L^2(\Omega)} + \norm{(S_h-S)v_h}_{L^2(\Omega)}\norm{S\eta_h}_{L^2(\Omega)}.
\end{align*}
Now using (\ref{eqn:bnd2}) and (\ref{eqn:erroranal}) we get
\[
\norm{(b_h-b)(v_h,\cdot)}_{H^1(\Omega)^*} \leq Ch \norm{v_h}_{H^1(\Omega)},
\]
and so $\norm{(b_h-b)(v_h,\cdot)}_{H^1(\Omega)^*} \to 0$ as $h \to 0$ by the boundedness of $\norm{v_h}_V$. Similar results hold for $c_h$ and $l_h$. Therefore we have convergence of limit points of the above discrete iterative method to critical points of (\ref{eqn:PF}) as $h \to 0$.
\end{example}

\begin{remark}
\label{rem:dwit}
As mentioned before Example \ref{ex:dwm3}, when solving the smooth double well problem in practice, we solve a finite element discretisation of the linearised iterative method (\ref{eqn:lala2}): Given $y_{d,h}$, $u_h^0 \in V_h$, for $n=1,2,...$ find $u_h=u_h^n \in V_h$ such that
\begin{equation}
\label{eqn:dvi2}
\begin{aligned}
\rho(u_h-u_h^{n-1}, \eta_h) &+ (p_h^{n-1}, \eta_h) + \sigma_1 \eps (\nabla u_h, \nabla \eta_h) \\
&+ \frac{\sigma_1}{\eps}((u_h^{n-1})^2 u_h - u_h, \eta_h) = 0 \quad \forall \eta_h \in V_h,
\end{aligned}
\end{equation}
where $y_h^{n-1}$, $p_h^{n-1} \in V_h$ satisfy
\begin{equation}
\label{eqn:adjointtt}
\begin{aligned}
\alpha (\nabla y_h^{n-1}, \nabla \eta_h) + (y_h^{n-1}, \eta_h) &= (u_h^{n-1}, \eta_h) \\
\alpha (\nabla p_h^{n-1}, \nabla \eta_h) + (p_h^{n-1}, \eta_h) &= (y_h^{n-1}-y_{d,h}, \eta_h)
\end{aligned}
\end{equation}
for all $\eta_h \in V_h$.

We use numerical integration on the linearised term. Note that the theorems do not necessarily hold for this iterative method, but it performs well in practice.
\end{remark}

\begin{example}[Double obstacle] \label{ex:do22}
Take the same definitions as in Example \ref{ex:dom2}. In addition take $V_h$ as above, $W_h:=K_h$, and define
\begin{align*}
b_h(u_h,\eta_h) &:= \rho (u_h,\eta_h) + \sigma_2 \eps(\nabla u_h, \nabla \eta_h) - \frac{\sigma_2}{ \eps}(u_h, \eta_h)  \\
c_h(u_h,\eta_h) &:= \rho (u_h,\eta_h) -(S_h u_h,S_h \eta_h) \\
l_h(u_h) &:= (S_h^* y_{d,h}, u_h) \\
J(u_h) &:= 0,
\end{align*}
where $S_h$ is the discrete elliptic operator defined by (\ref{eqn:jduy}), and $y_{d,h}$ is the $L^2$-projection of $y_d$ onto $V_h$. %So $I_h$ satisfies the condition (\ref{eqn:below}) by the same reasoning as in Example \ref{ex:doopt}.

For $\rho > \max \{\frac{\sigma_2}{\eps},C_s^2\}$ all the assumptions of Theorem \ref{thm:itss} are satisfied, so we get the decreasing energy property and convergence of iterates for the following discrete iterative method: Given $y_{d,h}\in V_h$ and $u_h^0 \in K_h$, for $n=1,2,...$ find $u_h=u_h^n \in K_h$ such that
\begin{equation}
\label{eqn:dopot}
\begin{aligned}
\rho(u_h-u_h^{n-1},\eta_h-u_h) &+ (p_h^{n-1} , \eta_h-u_h) + \sigma_2 \eps (\nabla u_h, \nabla \eta_h-\nabla u_h) \\
&- \frac{\sigma_2}{\eps} (u_h,\eta_h-u_h) \geq 0 \quad \forall \eta_h \in K_h 
\end{aligned}
\end{equation}
where $y_h^{n-1}$, $p_h^{n-1} \in V_h$ satisfy
\begin{align*}
\alpha (\nabla y_h^{n-1}, \nabla \eta_h) + (y_h^{n-1}, \eta_h) &= (u_h^{n-1}, \eta_h) \\
\alpha (\nabla p_h^{n-1}, \nabla \eta_h) + (p_h^{n-1}, \eta_h) &= (y_h-y_{d,h},\eta_h)
\end{align*}
for all $\eta_h \in V_h$.

Theorem \ref{thm:strang} gives convergence of limit points of the above discrete iterative method to critical points of (\ref{eqn:PFO}) as $h \to 0$.
\end{example}

\subsection{Algorithms}

\label{sec:algo}

The discrete iterative methods in Examples \ref{ex:sdw22} and \ref{ex:do22} lead to the following algorithms for binary image recovery, which we implement and test in the next section.

\subsubsection*{Smooth double well potential}
\label{sec:bbbbb}

Given $y_{d,h} \in V_h$ and an initial guess $u_h^0 \in V_h$, set $n=1$ then:
\begin{enumerate}
\item Solve (\ref{eqn:adjointtt}) for $y_h^{n-1}$ then $p_h^{n-1}$;
\item Solve (\ref{eqn:dvi2}) for $u_h^n$;
\item If $\norm{u_h^{n} - u_h^{n-1}}_{L^2(\Omega)} < \text{TOL}$ terminate the algorithm. Else set $n=n+1$ and go to step 1;
\end{enumerate}

An alternative stopping criterion would be to wait until the change in energy $\abs{F_1(u_h^n)-F_1(u_h^{n-1})}$ is sufficiently small. This has the advantage that the energy decreasing result then guarantees our algorithm terminates. However the stopping criterion in the above algorithm also gives a strong indication of a steady state, and it seems to work better in practice.

Note that despite the blurring and noise, $y_{d,h}$ still contains a lot of information about the solution. Therefore it makes sense to scale and threshold $y_{d,h}$ in order to get a good initial guess for $u_h^0$. 

\subsubsection*{Double obstacle potential}

The algorithm for this potential is the same as for the smooth double well potential, but we instead solve the variational inequality (\ref{eqn:dopot}) in step 2.

One method for solving the variational inequalities at each iteration is the primal-dual active set (PDAS) method. It is applied to solving the variational inequalities arising in the Allen-Cahn inequality in \cite{Blank2009a}. We implemented this method and found it to work well. However, for the numerics in the next sections we use an alternative method known as the Truncated Nonsmooth Newton Multigrid (TNNMG) method (see \cite{CarstenThesis, Multigrid}), which performs very well.

\section{Numerics}
\label{sec:numerics}

In this section we show some numerical examples of binary recovery in 1 and 2 dimensions. The data is blurred by the solution operator of the elliptic PDE (\ref{eqn:ghk}), with the parameter $\alpha$ controlling the level of blurring. It also has additive Gaussian noise of mean zero and variance $\gamma$.
 
We do the recovery using the discrete iterative methods of Remark \ref{rem:dwit} (based on the smooth double well potential) and (\ref{eqn:dopot}) (based on the double obstacle potential). In practice we observe convergence of the full sequence of iterates to steady states, which are discrete critical points of (\ref{eqn:rough}). As we take $\eps$ and $h$ small, we believe that these critical points closely approximate a global minimiser of the model (\ref{eqn:tv}). This is because the iterative methods give us discrete critical points of the approximate model (\ref{eqn:rough}), which seem to be at least discrete local minimisers of (\ref{eqn:rough}), as different initial iterates and (valid) values of $\rho$ do not lead to different steady states. In addition, for small $\eps$ (and appropriate $h$) the critical points are close to being binary i.e. feasible minimisers of the model (\ref{eqn:tv}). We cannot be certain how close they really are to the global minimisers of (\ref{eqn:tv}) due to the lack of explicitly known global minimisers for interesting problems. Regardless, by artificially generating data from a known binary function, the numerical results show that for small $\eps$ (and appropriate $h$) our iterative methods are effective at recovering something close to the binary function.

The weighting given to the regularisation (the parameter $\sigma$), which defines the nonconvex model (\ref{eqn:tv}), is an important but challenging issue. If we take $\sigma$ too small then recovered functions still have artifacts of the noise. If $\sigma$ is too large then we loose some features we actually want to keep. We show some figures and discuss some results on the choice of $\sigma$ for related problems in Appendix \ref{sec:sig}, however the theory does not apply to our particular problem. In this section we just take values of $\sigma$ that we have experimentally determined to work well for the problem at hand.

For the implementation we use the Distributed and Unified Numerics Environment (DUNE), see \cite{ISTL,dunegridpaperI:08, dunegridpaperII:08, dune-web-page, dunefempaper:10, dunefem-web-page}. DUNE provides interfaces for grids, solvers and finite element spaces. Therefore once the algorithms are implemented, it takes minimal effort to change features of the implementation that would usually be fixed, such as the grid type, the dimension of the problem, and the type of finite elements used.

\subsection{1D numerics}
\label{sec:num1d}

The test problem in 1D is inspired by the barcode problem of \cite{Esedoglu2004}, which was mentioned as a motivating example in Section \ref{sec:mot}. We try to recover a binary function taking the values $\{-1,1\}$, which one can imagine represents a cross section of a barcode (with values of -1 corresponding to black parts of the barcode and values of 1 corresponding to white parts). We suppose this binary function is corrupted, giving blurred and noisy data that we want to decode. The main difference between our test problem and the barcode problem in \cite{Esedoglu2004} is that we have chosen blurring caused by the solution operator of an elliptic PDE instead of a convolution. Although this is not a realistic blurring operator specified by this application, if our approach is effective for this blurring operator then it is likely to be effective for other blurring operators.
%Also it means our problem is a binary source recovery problem (see Section \ref{eqn:2dnum}).

The recovery using both the smooth double well and double obstacle potentials can be found in Figure \ref{fig:gaps}. The black lines represent the binary function that we want to recover, the blue lines are the artificial data we generate by adding blurring and noise, and the red lines are the recovered functions for each potential. Even by eye it is not clear exactly how many `bars' are in the binary functions, or the correct widths of the bars. But the recovered functions closely match the binary function we started with (which is why the black lines are almost hidden by the red lines), showing that our approach is effective. The figure also makes apparent one of the advantages of the double obstacle potential, which is that recovered functions take a form closer to what we actually want; binary functions. 

\begin{figure}[ht]
\centering
\subfigure[Smooth double well potential.]{
\includegraphics[width=0.7\textwidth, natwidth=768,natheight=225]{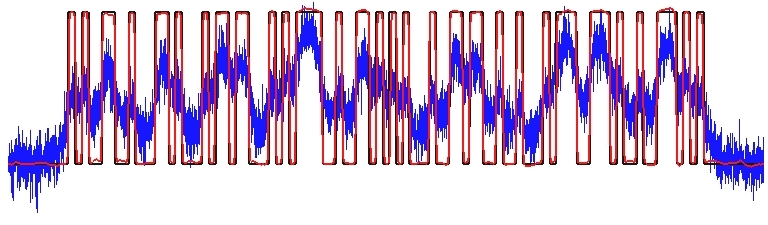}
\label{fig:gap1}
}
\subfigure[Double obstacle potential.]{
\includegraphics[width=0.7\textwidth,natwidth=768,natheight=225]{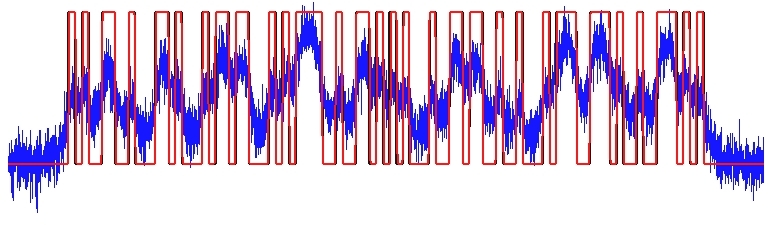}
\label{fig:gap2}
}
\caption{$\alpha=1e-4, \gamma=0.4, \sigma=1e-4, \eps=5.31e-4$ and $h=1.67e-4$.}
\label{fig:gaps}
\end{figure}

\subsection{2D numerics}
\label{sec:2dnum}

The test problems in 2D involve recovering binary functions with discontinuities of various shapes. In this dimension the problems have a natural interpretation as deblurring and denoising of images, but we also view them as binary source recovery problems for elliptic PDEs.

Figure \ref{fig:blobnoise} shows the recovery of a binary function using (\ref{eqn:rough}) with the smooth double well potential. The discontinuity is a `blob' shape and is marked by a black line. The blurred and noisy data for this function is shown in Figures \ref{fig:blobnoise0} and \ref{fig:blobnoise1}. Figure \ref{fig:blobnoise2} shows the recovered function, with a yellow line marking the zero level set. We can see that the yellow line closely matches the black line, except for a slight mismatch at the concave parts of the discontinuity. Note that we cannot make the interface as small as for the 1D problem as the resolution of the grid needed to resolve it makes this computationally expensive. Our implementation is capable of adaptivity, which lessens this cost somewhat, but we will not demonstrate this functionality in this work. With this simple visualisation the recovered function using the double obstacle potential looks very similar, so we do not include a figure of it. 

Figure \ref{fig:Anoise} (which can be interpreted in the same way as Figure \ref{fig:blobnoise}) shows the recovery of a binary function with a letter `A' shaped discontinuity. This time we use the double obstacle potential in (\ref{eqn:rough}), though the recovered function using the smooth double well potential looks similar. This example shows that the model can also recover discontinuities with corners reasonably accurately, but there is some rounding of these corners due to the regularisation.

To finish this section we show an example which relates to an application of binary image recovery in 2D. Figure \ref{fig:qr} shows the recovery of a binary function representing a QR code with 25x25 blocks (the size typically used to encode a URL). The yellow lines mark the discontinuity of the binary function. Figure~\ref{fig:qr1} shows the data with a red line marking the zero level set, and Figure \ref{fig:qr2} shows the recovered function. We see that features which are blurred below the zero level set (and which therefore would not be recovered by a simple projection) are nevertheless recovered by the model.

\begin{figure}[ht]
\centering
\subfigure[]{
\includegraphics[width=0.60\textwidth, natwidth=732,natheight=411]{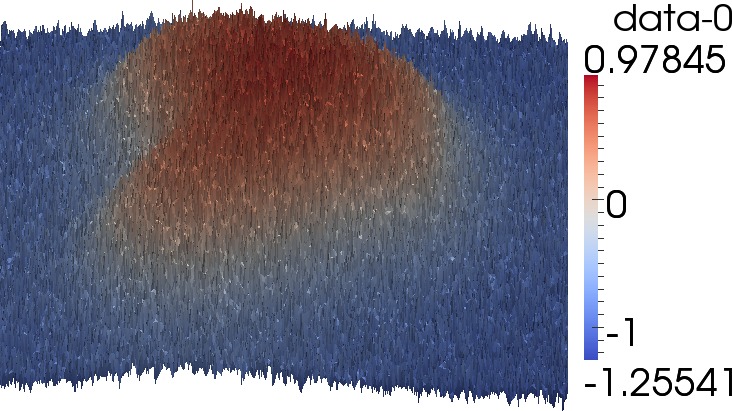}
\label{fig:blobnoise0}
}
\subfigure[]{
\includegraphics[width=0.47\textwidth, natwidth=730,natheight=567]{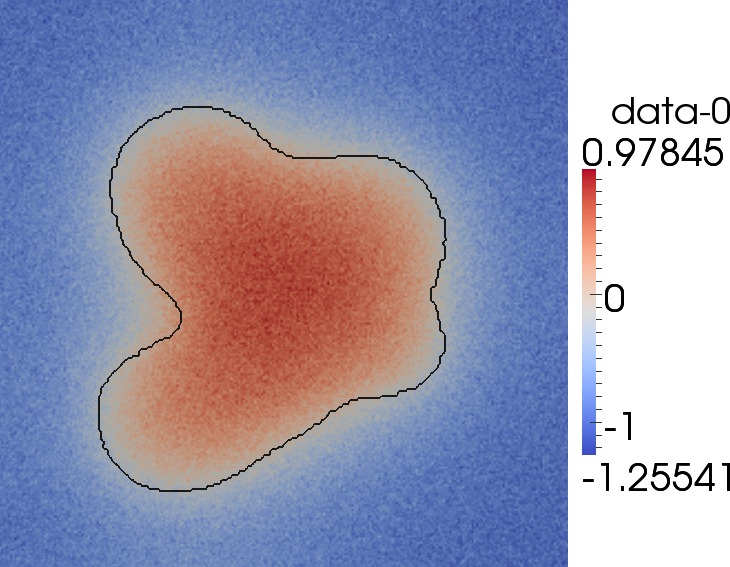}
\label{fig:blobnoise1}
}
\subfigure[]{
\includegraphics[width=0.47\textwidth, natwidth=753,natheight=567]{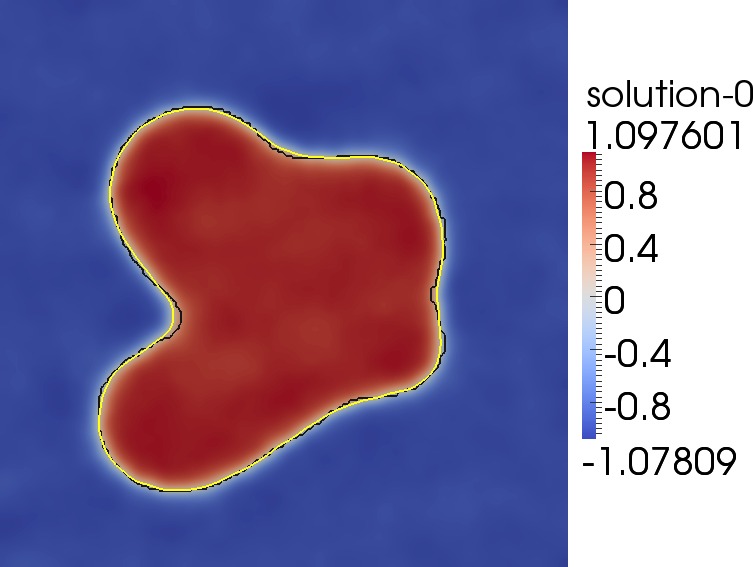}
\label{fig:blobnoise2}
}
\caption{$\alpha=0.01$, $\gamma = 0.2$, $\sigma = 1e-4$, $\eps = 0.00879$ and $h=0.00345$ using the smooth double well potential.}
\label{fig:blobnoise}
\end{figure}

\begin{figure}[ht]
\centering
\subfigure[]{
\includegraphics[width=0.6\textwidth,natwidth=742,natheight=394]{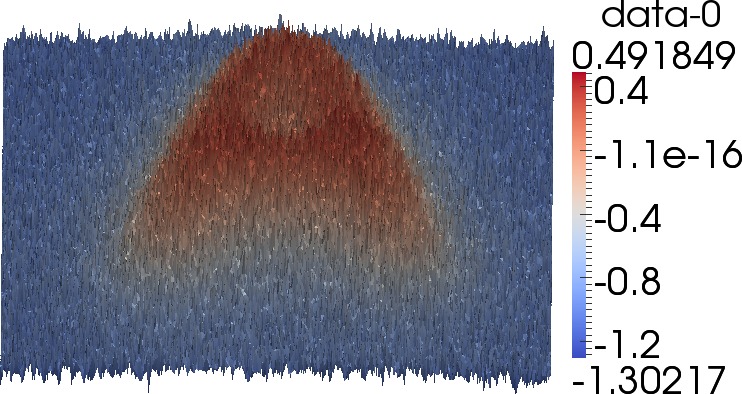}
\label{fig:Anoise0}
}
\subfigure[]{
\includegraphics[width=0.47\textwidth, natwidth=752,natheight=567]{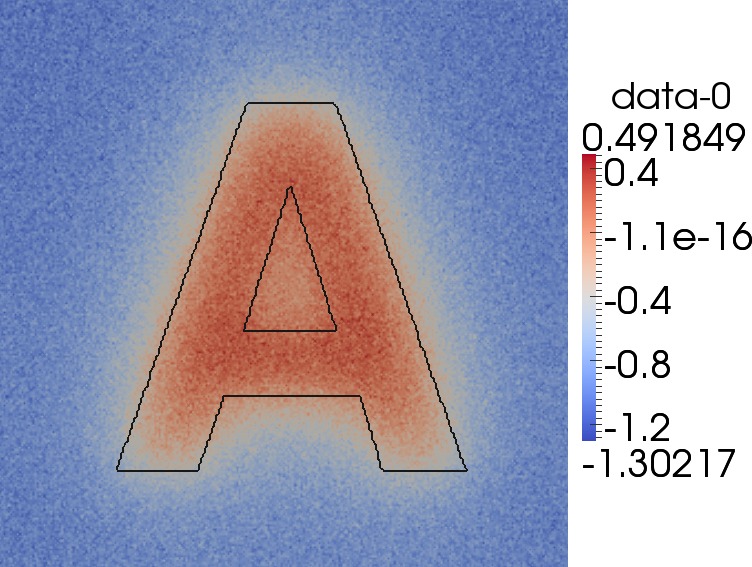}
\label{fig:Anoise1}
}
\subfigure[]{
\includegraphics[width=0.47\textwidth,natwidth=753,natheight=567]{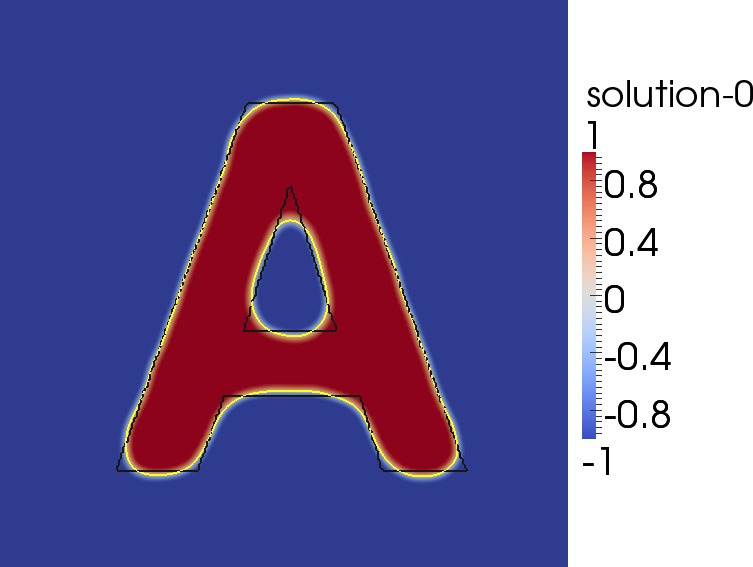}
\label{fig:Anoise2}
}
\caption{$\alpha=0.01$, $\gamma = 0.2$, $\sigma = 1e-4$, $\eps = 0.00879$ and $h=0.00345$ using the double obstacle potential.}
\label{fig:Anoise}
\end{figure}

\begin{figure}[ht]
\centering
\subfigure[]{
\includegraphics[width=0.47\textwidth,natwidth=683,natheight=684]{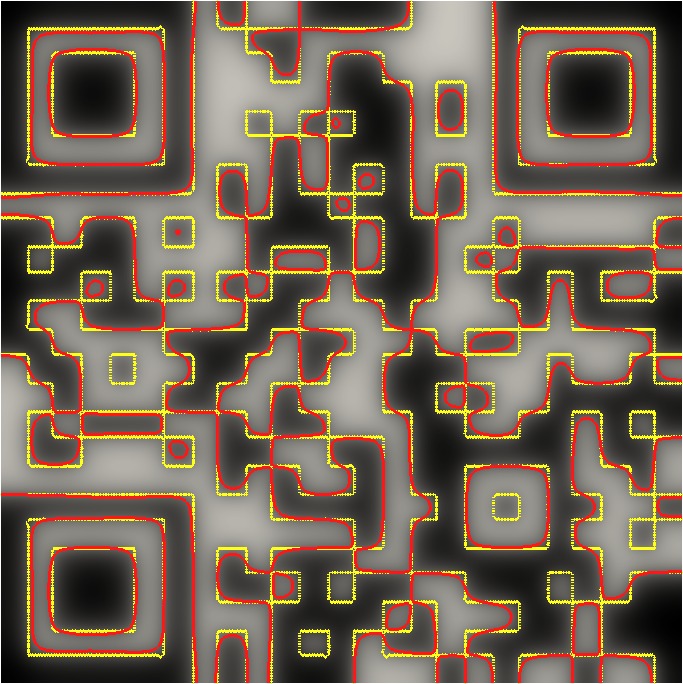}
\label{fig:qr1}
}
\subfigure[]{
\includegraphics[width=0.47\textwidth,natwidth=683,natheight=684]{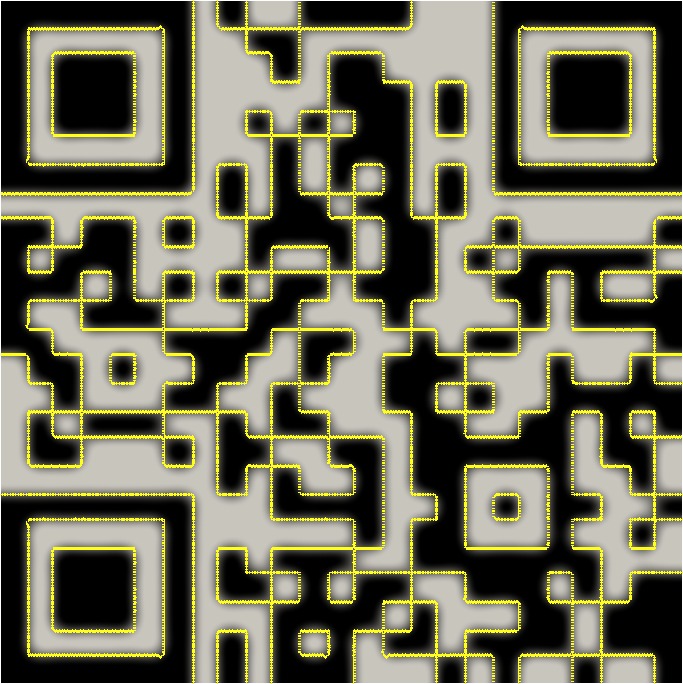}
\label{fig:qr2}
}
\caption{$\alpha=5e-4$, $\gamma = 0$, $\sigma = 1e-5$, $\eps = 0.00373$ and $h=0.00146$ using the double obstacle potential.}
\label{fig:qr}
\end{figure}

\section{Comparison of potentials in 1D}
\label{chap:comp}

Due to the $\Gamma$-convergence result of Theorem \ref{thm:gamma}, we expect that critical points of (\ref{eqn:rough}) for a given value of $\sigma$ using either the smooth double well or double obstacle potential will converge to critical point of (\ref{eqn:tv}) in the limit of small $\eps$. Of course the critical points they converge to are not guaranteed to be the same, but agreement of the limits is observed in practice, and for very small $\eps$ the recovered functions for both potentials are almost indistinguishable. However it is well known that for phase field type problems, the interface should be well resolved in order for an accurate spatial approximation. This means that the smaller $\eps$, the more grid points needed, and the higher the computational cost of the iterative methods. For many applications we only want to recover the location of the discontinuities in a binary function, which we suppose are given by the zero level set of the recovered function. 

This motivates us to consider in this section how well we can recover the locations of the discontinuities with $\eps$ of moderate size (rather than as small as possible), which is computationally cheaper. In this case the choice of potential does not just affect the implementation and speed of the iterative method; the recovered functions will in general look quite different, and there may be differences in how accurately or reliably the locations of the discontinuities are recovered. 

As in Section \ref{sec:numerics} we consider a problem with blurring caused by the solution operator of the elliptic PDE (\ref{eqn:ghk}) and additive Gaussian noise of mean zero and variance $\gamma$. We use the discrete iterative method of Remark \ref{rem:dwit} for the smooth double well potential and (\ref{eqn:dopot}) for the double obstacle potential.

At this stage it is helpful to recall the parameters we have introduced so far, as well as introduce a new parameter $\omega$, the width of the smallest bar in the binary function. The parameters are contained in Table \ref{tab:params}, and have been classified as follows:
\begin{itemize}
\item Problem parameters - Define the problem we are trying to solve. In applications we have no control over these, though we suppose they are known a priori.
\item Model parameters - Specify the model we will use to solve the problem. Different values can lead to the recovery of quite different functions, so they need to be chosen carefully.
\item Approximation parameters - We do not work with the model, but rather an approximation of it. These parameters control how good the approximation is.
\item Discretisation parameters - Affect the accuracy of the spatial discretisation in the iterative method.
\item Iteration parameters - Determine the behaviour of the iterative method.
\item Implementation parameters - Control the finer details of the implementation.
\end{itemize}

\begin{table}
\label{tab:params}
\begin{center}
    \begin{tabular}{ | l | l | l | l |}
    \hline
    Parameter & Description & Type of parameter & Optimal value \\
    \hline
    $\omega$ & Width of smallest bar in binary function& Problem & - \\
    $\alpha$ & Level of blurring & Problem & - \\
    $\gamma$ & Level of noise & Problem & - \\
    $\sigma$ & Weighting given to perimeter regularisation & Model & $\omega/80$ \\
    $\eps$ & Order of  width of interface & Approximation & $\omega / 4 \pi$\\
    $h$ & Grid width & Discretisation & $\omega / 32$ \\
    $u^0$ & Initial iterate & Iteration & - \\
    $\rho$ & Parameter in iterative method & Iteration & DW: $0.833$, DO: $0.588$\\
    TOL & Stopping criterion & Implementation & DW: $3e-4$, DO: $3.5e-4$ \\
    \hline
    \end{tabular}
\end{center}
\caption{Parameter types.}
\end{table}

We also have a number of less significant implementation parameters that handle the imprecision of computer arithmetic. These will be set to sensible values and ignored in our discussion. 

Motivated by the above discussion we now investigate differences between the smooth double well and double obstacle potentials in accuracy, reliability, speed, and implementational complexity. 

\subsection{Accuracy}
\label{sec:acc}

Denote the binary function we want to recover by $\bar{u}$ and the recovered function by $u_{\eps,h}$. We measure the accuracy of the recovery by calculating the error quantity
\[
E(u_{\eps,h}) := \frac{1}{4} \big |\abs{P(u_{\eps,h})}_{TV} - \abs{\bar{u}}_{TV} \big | + \frac{1}{2} \norm{P(u_{\eps,h})-\bar{u}}_{L^1(\Omega)},
\]
where $P$ is the $L^2$ projection onto the space $BV(\Omega, \{-1,1\})$ (i.e. $P(u_{\eps,h}) = 1$ when $u_{\eps,h} \geq 0$ and $-1$ when $u_{\eps,h}<0$). $\abs{u}_{TV}$ is the total variation of $u$, as defined in Section \ref{sec:background}. The integer part of $E(u_{\eps,h})$ tells us the absolute difference between the number of bars in the projected recovered function and $\bar{u}$. The decimal part tells us whether the discontinuities in the projected recovered function are in the correct locations. So $E$ measures the accuracy of the recovery in a sense that matters in applications.

We project because our best guess of $\bar{u}$ should lie in $BV(\Omega, \{-1,1\})$. The downside of this is that $P(u_{\eps,h})$ is not a minimiser of $(\ref{eqn:rough})$. It is important to note that the recovery using the double obstacle potential is naturally much closer to being binary than with the smooth double well potential, so projection is less necessary. This is a big advantage of using the double obstacle potential, which must be remembered when values of $E(u_{\eps,h})$ seem comparable.

The test problems we use for our comparison use the same binary function as in Section \ref{sec:num1d} (which has $\omega=\frac{1}{113}$), and different levels of blurring and noise i.e. a range of values of $\alpha$ and $\gamma$. We first fix $\sigma$ based on the size of $\omega$ (as described in Appendix \ref{sec:sig}) then choose good values of the approximation and discretisation parameters (as described in Appendices \ref{sec:eps} and \ref{sec:h}). So we have $\sigma=1e-4$, $\eps=7.06e-4$ and $h=2.77e-4$ for both potentials. Each realisation of the noise will be different, so we calculate an average $E$ over multiple realisations of the noise. As we observed earlier, we get the same steady state of (\ref{eqn:rough}) regardless of the choice of iteration parameters. The same is true for implementation parameters. So we ignore both these types of parameters in our discussion of accuracy.

We see in Figure \ref{fig:acc} that neither potential is the most accurate in all circumstances. For moderate levels of noise ($\gamma=0.2$), the double obstacle potential leads to a slightly more accurate recovery. However for high levels of noise ($\gamma=0.4$), the smooth double well potential seems to perform slightly better. Without projection the double obstacle potential always leads to a recovery which is significantly more accurate than the smooth double well potential.

\begin{figure}
\centering
\includegraphics[width=0.47\textwidth, natwidth=733,natheight=601]{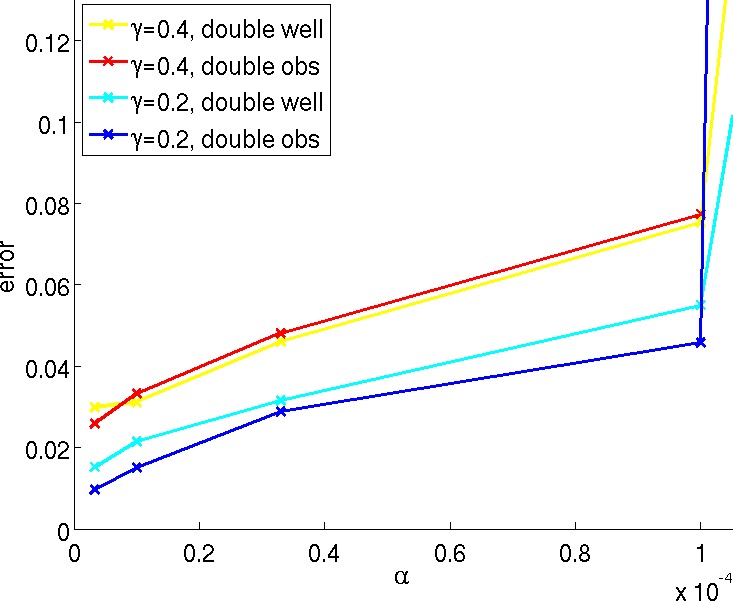}
\caption{The error (averaged over many realisations of the noise) for both potentials at different levels of blurring and noise.}
\label{fig:acc}
\end{figure}

\subsection{Reliability}
\label{sec:rel}

By reliability we refer to the range of problems (i.e. the levels of blurring and noise) over which a binary function can be recovered with reasonable accuracy; as the amount of blurring and noise are increased, eventually the recovered function does not resemble the binary function we wanted. Note that this range will depend on $\sigma$. We do not do a detailed comparison of reliability, but feel that it is comparable for both potentials. For example, we can see in Figure \ref{fig:acc} that $\alpha=1e-4$ and $\gamma=0.4$ is roughly the limit at which the correct number of bars can be recovered using either potential. 

\subsection{Speed}
\label{sec:speed}

The time it takes to recover a function which resembles the binary function is an important practical consideration. Where as accuracy is independent of the implementation, this is certainly not the case for speed. All but the inner workings of each iterative method in our implementation are identical, so we will do our best to make a fair comparison of speed.

We perform this comparison for the binary function of Section \ref{sec:num1d}, one choice of blurring and noise ($\alpha=1e-4$ and $\gamma=0.2$), and $\sigma$ as in Sections \ref{sec:acc} and \ref{sec:rel}. Choices of $\eps$ and $h$ as well as iteration and implementation parameters have a big impact on speed, so we will test two different combinations of these parameters. Our timings can be found in Table \ref{tab:runtimes}.

The runtimes for `accurate recovery' use $\eps$ and $h$ as in Sections \ref{sec:acc} and \ref{sec:rel}, and TOL as described in Appendix \ref{sec:stop}. These values have been chosen to ensure robustness. The table also contains timings for `rough recovery', where less conservative parameter values are used ($\eps=\frac{\omega}{2 \pi}$, $h = \frac{\omega}{20}$, and TOL as described in Appendix \ref{sec:stop}). For many problems we can still get a reasonable recovery with these parameter values, and it lowers the computation time significantly.

\begin{table}
 \centering
\begin{tabular}{|c|c|c|c|c|}
 \hline 
& Time for rough recovery (s)  & Time for accurate recovery (s) \\
\hline
\underline{Smooth double well} & & \\
Average time/it & 0.0359& 0.181 \\
%avg time/T & 0.0300& 0.150 \\
\# iterations & 11 & 170 \\
Runtime  & 0.41& 29.9\\
\hline
\underline{Double obstacle} & & \\
Average time/it &0.0639 & 0.255 \\
%avg time/T & 0.0378& 0.150 \\
\# iterations & 9 & 170 \\
Runtime & 0.58& 42.6\\
\hline
\end{tabular}
\caption{Average runtimes for with $\alpha=1e-4$ and $\gamma=0.2$.}
\label{tab:runtimes}
\end{table}

The recovery times are comparable for each potential for both rough and accurate recovery, though the smooth double well potential has a slight advantage for this size of problem. However we remark that the recovery time of the double obstacle potential scales better as the number of degrees of freedom in the discretisation increases, so it has better performance in 2D.

\subsection{Implementational complexity}
\label{sec:imp}

Implementing the iterative method for the double obstacle potential is less standard as we are solving variational inequality rather than a PDE. But it is no more complicated than implementing adaptivity, which is needed for the computational cost of the iterative method for the smooth double well potential to scale well to dimensions 2 and higher.

\subsection{Summary of comparison}

Both potentials can accurately recover binary functions over the same range of blurring and noise. If no projection is used, the double obstacle potential produces significantly more accurate results. Even with projection it is more accurate for moderate levels of blurring and noise. Our implementation using the smooth double well potential is slightly quicker for both accurate and rough binary recovery on our 1D test problem. However our implementation using the double obstacle potential, which is overall no more complicated, scales better to many degrees of freedom and so tends to be quicker in higher dimensions.

%  \section{Conclusion}
% \label{sec:conc}

% We have formulated the denoising and deblurring problem as a relaxation of the Mumford-Shah model by replacing total variation regularisation by a phase field approximation. We allowed two different potentials in this approximation; the smooth double well and the double obstacle potential. We fitted both these potentials into a single abstract framework, and developed iterative methods and convergence results for the continuous and discrete formulations.

% We implemented our iterative methods in DUNE and demonstrated their effectiveness for binary recovery on test problems in both 1 and 2 dimensions. We finished by performing a detailed comparison of both potentials. This allowed us to conclude that the double obstacle potential has a number of advantages over the smooth double well potential in this context. 

\appendix

\section{Parameter choices}

\label{sec:appendix}

In this appendix we describe our methodology for choosing parameter values for the  numerical tests and comparisons in Sections \ref{sec:numerics} and \ref{chap:comp}.  

\subsection{Choice of model parameter  \texorpdfstring{$\sigma$}{sigma}}
\label{sec:sig}

We recover different functions for different values of $\sigma$, so it is important to choose the `right' value. This is illustrated in Figure \ref{fig:sigmachoice}, where we show the recovered functions for the same problem as in Figure \ref{fig:gap1} for different values of $\sigma$. We see that $\sigma=5e-3$ leads to too few bars being recovered. The recovered function for $\sigma=1e-6$ follows the noise too much and does not resemble a binary function. With $\sigma=1e-4$ we recover something close to the binary function that generated the data, so we consider this to be a good value.

\begin{figure}[ht]
\centering
\subfigure[$\sigma=5e-3$.]{
\includegraphics[width=0.7\textwidth, natwidth=878,natheight=239]{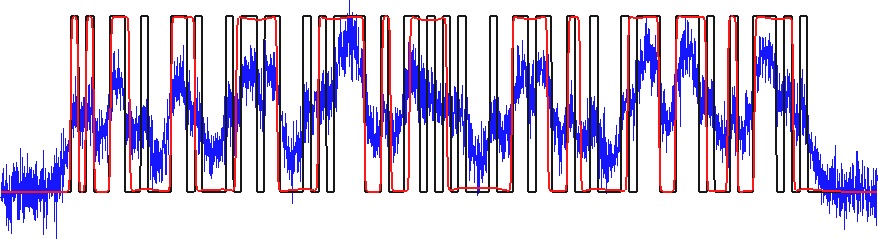}
}
\subfigure[$\sigma=1e-4$.]{
\includegraphics[width=0.7\textwidth, natwidth=878,natheight=239]{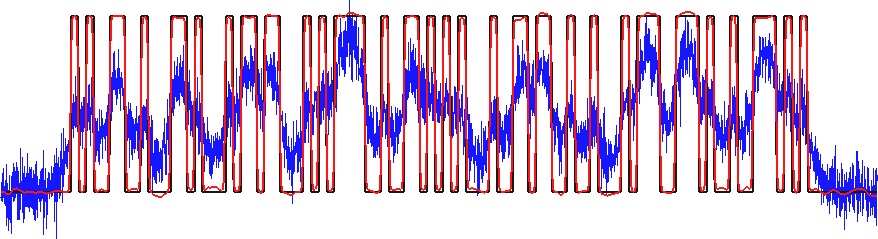}
}
\subfigure[$\sigma=1e-6$.]{
\includegraphics[width=0.7\textwidth, natwidth=878,natheight=266]{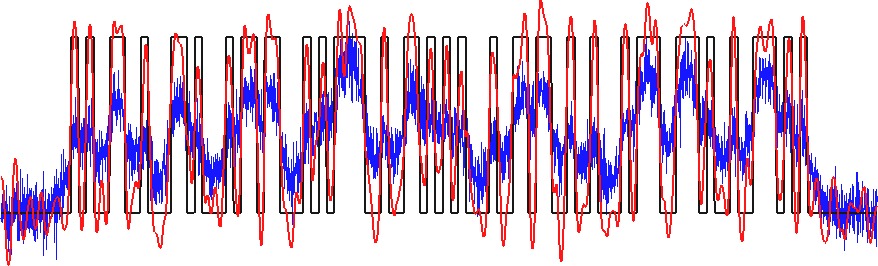}
}
\caption{The problem of Figure \ref{fig:gap1} with different values of $\sigma$.}
\label{fig:sigmachoice}
\end{figure}

It is known that the choice of $\sigma$ in (\ref{eqn:tv}) should be related to the variance of the noise. Noise with a large variance requires a large $\sigma$ in order for good recovery. We could try to figure out the variance of the noise from the data and use this to choose $\sigma$, however there is not an explicit form for the relationship. Instead we choose $\sigma$ based on the length scale of the features that we want to recover (i.e.\ the parameter $\omega$), and use the same $\sigma$ for all levels of noise. In applications this is generally known a priori e.g. for barcode recovery. This approach works well because we take $\sigma$ be as large as possible while not removing the features we want to recover, and hence perform the maximum amount of denoising. We do not seem to pay a significant price for this large $\sigma$ in cases where the noise is small, and this approach leads to a simple rule for choosing $\sigma$. The literature that gives us a heuristic way of choosing such a $\sigma$ is introduced below.

The following result shows that it is unwise to take $\sigma$ too large.
\begin{proposition}
There exists a $\sigma^*>0$ such that the minimiser of (\ref{eqn:tv}) is 0 iff $\sigma > \sigma^*$.
\end{proposition}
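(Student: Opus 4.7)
The statement is a threshold-type result: large $\sigma$ forces the regulariser to swamp the fidelity, so the ``trivial'' minimiser wins. Interpreting the ``$0$'' of the conclusion as the trivial (zero-perimeter) function admissible in the problem, the plan has three ingredients: monotonicity in $\sigma$, reaching the trivial minimiser for large $\sigma$ via an isoperimetric comparison, and ruling it out for small $\sigma$.

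The first step is a clean monotonicity argument. Define $\Sigma := \{\sigma \geq 0 : 0 \text{ minimises } F\}$ and set $\sigma^* := \inf \Sigma$. For $\sigma_0 \in \Sigma$ and $\sigma \geq \sigma_0$, writing the functional with explicit parameter dependence, one computes
\[
F_\sigma(u) - F_\sigma(0) = \bigl[F_{\sigma_0}(u) - F_{\sigma_0}(0)\bigr] + (\sigma - \sigma_0)\, \mathrm{Per}(\{u = a_1\}) \geq 0,
\]
since the bracket is nonnegative by minimality at $\sigma_0$ and $\mathrm{Per}(\{0 = a_1\}) = 0$. So $\Sigma$ is upward-closed, and the theorem reduces to showing $0 < \sigma^* < \infty$.

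For $\sigma^* < \infty$: I would show that for $\sigma$ large enough, $0$ strictly beats every non-trivial competitor. Writing $u = a_0 + (a_1 - a_0)\chi_A$ with $A := \{u = a_1\}$ non-trivial, one expands the fidelity difference using linearity and continuity of $S$ together with Cauchy--Schwarz, bounding it by a constant depending only on $y_d$ and $S$ times an appropriate power of $|A|$; then a relative isoperimetric inequality in $\Omega$ bounds $\mathrm{Per}(A)$ from below, and large $\sigma$ makes the regulariser dominate uniformly in $u$. For $\sigma^* > 0$: I would exhibit a specific non-trivial $u_1$ (e.g.\ a thresholding of $y_d$, assuming $y_d \not\equiv 0$) with $F_{\sigma=0}(u_1) < F_{\sigma=0}(0)$, then use continuity in $\sigma$ to conclude $F_\sigma(u_1) < F_\sigma(0)$ for all $\sigma$ in a right-neighbourhood of $0$, so $0$ cannot be a minimiser there.

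The main obstacle will be the isoperimetric estimate in the second step. A naive ``$\mathrm{Per} \geq c > 0$'' lower bound fails because thin slabs can have arbitrarily small perimeter. The fix is a scaling-based balance: in the small-measure regime the fidelity gain from adding a set $A$ scales at worst linearly in $|A|$, whereas $\mathrm{Per}(A)$ scales only as $|A|^{(N-1)/N}$ by the isoperimetric inequality, so their ratio tends to $0$ as $|A| \to 0$ and a large enough $\sigma$ eventually wins. In one dimension this subtlety evaporates, since any non-trivial $A \subset \Omega$ satisfies $\mathrm{Per}(A) \geq 1$.
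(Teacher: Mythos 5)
The paper offers no proof to compare against---it simply cites Proposition 5.7 of \cite{Chan-Esedoglu}---so your argument has to stand on its own, and it has a genuine gap in the step $\sigma^* < \infty$. Your monotonicity step is clean and is the right way to get the threshold structure, but the isoperimetric comparison only controls competitors $A = \{u = a_1\}$ whose measure is bounded away from $\abs{\Omega}$: the relative isoperimetric inequality bounds $\mathrm{Per}(A)$ below by $c\,\min(\abs{A},\abs{\Omega\setminus A})^{(N-1)/N}$, which degenerates as $\abs{\Omega\setminus A}\to 0$, and the extreme case $A=\Omega$, i.e.\ $u\equiv a_1$, has \emph{zero} relative perimeter and is therefore completely immune to increasing $\sigma$. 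Since $Sc=c$ for constants, $F(u\equiv a_1)-F(u\equiv a_0) = -(a_1-a_0)\int_\Omega \bigl(y_d-\tfrac{a_0+a_1}{2}\bigr)$ independently of $\sigma$; whenever this is $\leq 0$ (for $a_0=-1$, $a_1=1$, whenever $\int_\Omega y_d\geq 0$) the trivial state is never the minimiser and your $\sigma^*$ is $+\infty$. So the claim, under your reading of ``$0$'' as the constant $a_0$, is false without a hypothesis on $y_d$, and even with such a hypothesis you need a separate argument in the regime $\abs{\Omega\setminus A}$ small: compare $A$ to $\Omega$ rather than to $\emptyset$ and absorb the $O(\abs{\Omega\setminus A})$ fidelity error into the strictly positive gap between the two constant states, rather than into the (possibly vanishing) perimeter term.

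Two further points. Your scaling balance relies on the fidelity gain being $O(\abs{A})$; under the standing assumptions of the paper ($S$ bounded on $L^2$, $y_d\in L^2$) Cauchy--Schwarz only yields $\abs{(\chi_A,\,S^*(Sa_0-y_d))}\leq \abs{A}^{1/2}\norm{S^*(Sa_0-y_d)}_{L^2(\Omega)}$, and $\abs{A}^{1/2}$ loses to $\sigma\abs{A}^{(N-1)/N}$ as $\abs{A}\to 0$ when $N=3$. The linear bound you invoke requires $S^*(Sa_0-y_d)\in L^\infty(\Omega)$, which does hold for the elliptic solution operator of \eqref{eqn:ghk} by elliptic regularity and Sobolev embedding for $N\leq 3$, but not for a generic bounded $S$ on $L^2$; this hypothesis should be made explicit. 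Finally, the monotonicity step only shows the set of admissible $\sigma$ is an up-set, hence of the form $[\sigma^*,\infty)$ or $(\sigma^*,\infty)$, so the stated ``iff'' with strict inequality needs an argument at the endpoint, and the strict positivity $\sigma^*>0$ fails in degenerate cases (e.g.\ $y_d=Sa_0$ with $S$ injective), again reflecting that the cited result in \cite{Chan-Esedoglu} carries hypotheses that the proposition as transplanted here suppresses.
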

\begin{proof}
Proposition 5.7 in \cite{Chan-Esedoglu}.
\end{proof}

\noindent But we also need to be careful not to take $\sigma$ too small. In fact, since $S$ is known we have the following result in the 1D case.
\begin{theorem}
In the absence of noise there exists a $\sigma_*>0$ such that the minimiser of (\ref{eqn:tv}) is $\bar{u}$ whenever $\sigma \leq \sigma_*$. 
\end{theorem}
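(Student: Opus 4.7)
The plan is to compare $F(u)$ with $F(\bar u)$ directly. In the absence of noise we have $y_d = S\bar u$, so
\[
F(u) - F(\bar u) = \tfrac{1}{2} \norm{S(u-\bar u)}_{L^2(\Omega)}^2 + \sigma \bigl(\mathrm{Per}(\{u=1\}) - \mathrm{Per}(\{\bar u = 1\})\bigr).
\]
Since $\bar u \in BV(\Omega,\{-1,1\})$ in one dimension, it has only finitely many jumps; call this number $N := \mathrm{Per}(\{\bar u=1\})$. For any competitor $u \in BV(\Omega,\{-1,1\})$ with $\mathrm{Per}(\{u=1\}) \geq N$, the perimeter term is non-negative, so $F(u) > F(\bar u)$ follows as soon as $Su \neq S\bar u$; the operator $S$ from (\ref{eqn:ghk}) is injective (a vanishing right-hand side in the elliptic problem forces the source to vanish), so $u \neq \bar u$ forces $\norm{S(u-\bar u)}_{L^2(\Omega)} > 0$. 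The real work is therefore to handle competitors $u$ with strictly fewer jumps than $\bar u$; here the perimeter term in the display above is \emph{negative} and must be dominated by the fidelity term.

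The main obstacle, and the crux of the argument, is a uniform lower bound
\[
C_k := \inf \bigl\{ \norm{S(u-\bar u)}_{L^2(\Omega)}^2 : u \in BV(\Omega,\{-1,1\}),\ \mathrm{Per}(\{u=1\}) \leq k \bigr\} > 0
\]
for each integer $0 \leq k < N$. To prove this I would use a compactness argument: a function in the constraint set is determined by at most $k+1$ jump locations in $\bar\Omega$ together with a sign, which lie in a compact finite-dimensional parameter space. Given a minimising sequence $\{u_n\}$ for $C_k$, the jump locations have a convergent subsequence, so $u_n \to u_*$ in $L^1(\Omega)$, hence in $L^2(\Omega)$ (since $|u_n|\leq 1$), and $u_* \in BV(\Omega,\{-1,1\})$ has at most $k$ jumps. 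By continuity of $S:L^2\to L^2$ we would have $Su_* = S\bar u$, and by injectivity $u_* = \bar u$; but $\bar u$ has $N > k$ jumps, a contradiction. Hence $C_k > 0$.

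Setting $C_* := \min_{0 \leq k < N} C_k > 0$, for any $u \in BV(\Omega,\{-1,1\})$ with $\mathrm{Per}(\{u=1\}) = k < N$ we obtain
\[
F(u) - F(\bar u) \geq \tfrac{1}{2} C_* - \sigma (N-k) \geq \tfrac{1}{2} C_* - \sigma N.
\]
Choosing $\sigma_* := C_*/(4N)$ ensures $F(u) - F(\bar u) \geq C_*/4 > 0$ for every such $u$ whenever $\sigma \leq \sigma_*$, while the case $\mathrm{Per}(\{u=1\}) \geq N$ was already handled by injectivity of $S$. Thus $\bar u$ is the unique minimiser of (\ref{eqn:tv}) for every $\sigma \leq \sigma_*$. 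The step I expect to require the most care is the compactness argument for $C_k > 0$: one must articulate carefully that the jump-location parametrisation is genuinely compact (jumps may merge or migrate to $\partial \Omega$, which is why the limit lives in $\mathcal{B}_k$ rather than some larger set), and verify $L^2$ convergence along the extracted subsequence so that continuity of $S$ applies.
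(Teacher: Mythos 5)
Your argument is correct, but it is not the paper's: the paper disposes of this theorem with a one-line citation to Proposition 5 of \cite{Esedoglu2004}, a result proved there in the barcode setting where $S$ is a convolution operator. Your proof is a self-contained alternative that works directly with the splitting $F(u)-F(\bar u)=\tfrac12\norm{S(u-\bar u)}_{L^2(\Omega)}^2+\sigma\bigl(\mathrm{Per}(\{u=1\})-N\bigr)$ and handles the dangerous competitors (those with fewer than $N$ jumps) via a compactness argument on the finite-dimensional space of jump configurations. This buys generality and transparency: it applies to any continuous linear \emph{injective} $S$ in 1D (in particular the elliptic solution operator of (\ref{eqn:ghk}) actually used in the paper, for which the citation is strictly speaking not a proof), and it isolates injectivity as the hypothesis without which the theorem is simply false (a non-injective $S$ could satisfy $S(u-\bar u)=0$ for some $u$ with fewer jumps, which would then beat $\bar u$ for every $\sigma>0$). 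What it gives up is any explicit value of $\sigma_*$: your $C_*$ comes out of a soft compactness contradiction, whereas the cited literature (and the sharper Theorem 1.1 of \cite{Choksi2010} that the paper mentions immediately afterwards) ties $\sigma_*$ quantitatively to the geometry of $\bar u$ and the blurring kernel. The one step you flag as delicate --- compactness of the jump-location parametrisation, with jumps allowed to merge or migrate to $\partial\Omega$ so that the limit has \emph{at most} $k$ jumps --- is handled correctly, and the resulting $L^1$ (hence, by uniform boundedness, $L^2$) convergence does let you pass to the limit through $S$. The proof is sound as written.
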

\begin{proof}
Proposition 5 in \cite{Esedoglu2004}.
\end{proof}

Another interesting result is Theorem 1.1 part 2 in \cite{Choksi2010}, which proves more explicit conditions on $\sigma$ to ensure exact recovery in the case that $S$ is a convolution with a hat function in 1D. Due to our complicated form for $S$ we are forced to use a more heuristic argument to choose a good value for $\sigma$. 

\cite{Chan-Esedoglu-Nikolova} shows that for the 1D case in the absence of blurring and noise (i.e. binary data), local and global minimisers of (\ref{eqn:tv}) can be calculated explicitly for a given value of $\sigma$. These considerations suggest we should take $\sigma$ to be smaller than a quarter of the size of the smallest object we want to recover. In particular, $\sigma=\frac{\omega}{8}$ seems like a sensible choice. But this assumes binary data. We have blurring, which means the differences between the functions in the $\norm{Su-y_d}_{L^2(\Omega)}^2$ term can be much smaller. Hence we take $\sigma$ an order of magnitude smaller i.e. $\sigma = \frac{\omega}{80}$. This $\sigma$ is still larger than the length scale of the noise (which is of order $h$), so the results in \cite{Chan-Esedoglu-Nikolova} say it will be removed. Numerical experiments confirm that this choice of $\sigma$ works well in practice. 

\subsection{Choice of \texorpdfstring{$\eps$}{eps}}

\label{sec:eps}

The phase field approximation in (\ref{eqn:rough}) results in solutions with interfaces of width $o(\eps)$. In order for an accurate spatial approximation we need a reasonable number of grid points across the interfaces. So a smaller $\eps$ requires more grid points and a higher computational cost. With this is mind we want to take $\eps$ as large as we can while still resolving the finest features of the binary function. So the choice of $\eps$ should be related to the value of $\omega$.

We assume that there is a linear relationship between the optimal choice of $\eps$ and $\omega$ and deduce the constant of proportionality $c_1$ such that we get a good recovery with $\pi \eps = c_1 \omega$. Note that $\pi \eps$ is the asymptotic width of the interface for minimisers of the Ginzburg-Landau functional with the double obstacle potential, and a good approximation with the smooth double well potential. The width of interfaces in minimisers of (\ref{eqn:rough}), a perturbed Ginzburg-Landau functional, are approximately the same size. So $c_1$ can be thought of as the relative width of the interface compared to the width of the smallest bar. 

To determine $c_1$ we recover a simple binary function which can be seen in Figure \ref{fig:testprob}. We take $\omega_1=\omega_2=\omega_3=0.2$ (i.e. bars of equal widths), as we found the case where all bars are at the finest length scale to be the hardest for accurate recovery. We consider different levels of blurring and noise and compute the error $E$ of the recovered functions. We take $\sigma$ to be the optimal value of $\frac{\omega}{80}$ that we decided upon in Appendix \ref{sec:sig}, and take $\pi \eps=50h$ to ensure that effects of the spatial discretisation do not distort our results.

\begin{figure}
\centering
\includegraphics[width=0.35\textwidth,natwidth=440,natheight=169]{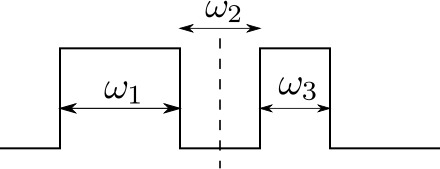}
\caption{A simple binary function.}
\label{fig:testprob}
\end{figure}

We observe that for a high signal to noise ratio we can take $c_1$ very large and still get accurate recovery ($\alpha=0.01$ in Figure \ref{fig:epsfac}), even though the bars do not separate properly (see Figure \ref{fig:gapss1}). For low signal to noise ratios ($\alpha=0.1$ in Figure \ref{fig:epsfac}) we need to take $c_1 \leq 0.5$ for accurate recovery, though it is not until $c \leq 0.25$ that the interfaces start to look reasonably sharp (see Figure \ref{fig:gapss2}). As expected there is not an accuracy penalty for taking $c_1$ too small, however it increases computation time by forcing us to take smaller $h$ in order to resolve the interfaces. This motivates us to take $c_1=0.25$ i.e. $\pi \eps= \frac{\omega}{4}$.

\begin{figure}
\centering
\includegraphics[width=0.47\textwidth,natwidth=725,natheight=599]{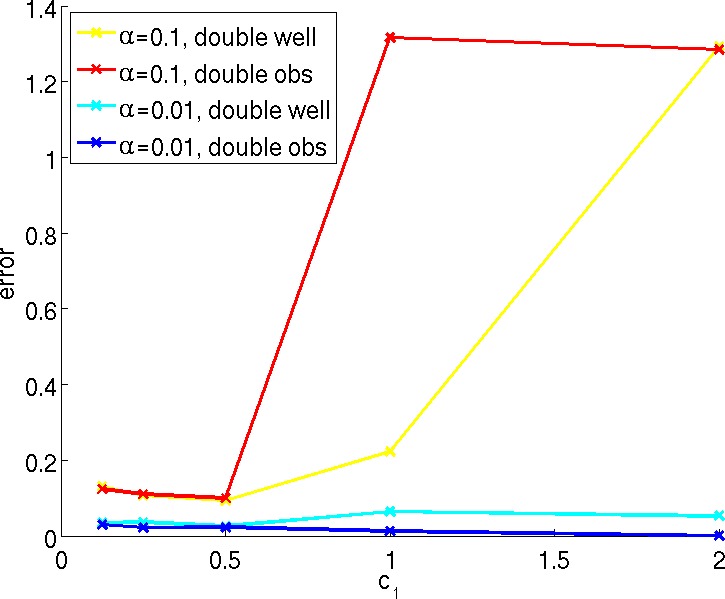}
\caption{Errors (averaged over many realisations of the noise) for both potentials at different levels of blurring and $\gamma=0.2$.}
\label{fig:epsfac}
\end{figure}

\begin{figure}[ht]
\centering
\subfigure[$c_1=2.0$.]{
\includegraphics[width=0.47\textwidth,natwidth=528,natheight=98]{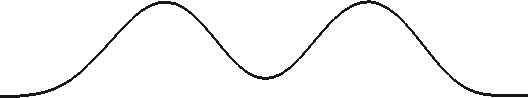}
\label{fig:gapss1}
}
\subfigure[$c_1=0.25$.]{
\includegraphics[width=0.47\textwidth,natwidth=528,natheight=110]{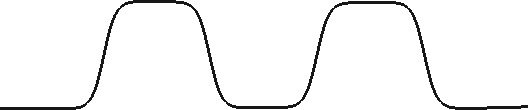}
\label{fig:gapss2}
}
\caption{The interfaces using the smooth double well potential with different values of $c_1$. \ref{fig:gapss2} shows the interfaces for $c_1=0.25$, which we decide is the optimal parameter value.}
\label{fig:gapss}
\end{figure}

\subsection{Choice of \texorpdfstring{$h$}{h}}
\label{sec:h}

We use the same test problems as in Appendix \ref{sec:eps} to deduce a constant factor $c_2$ such that we get a good recovery with $\pi \eps=c_2h$. Hence $c_2$ can be thought of as the number of grid elements across each interface. 

With a high signal to noise ratio ($\alpha=0.01$ in Figure \ref{fig:wfac}) it can actually be advantageous to have few grid points across the interface. In this case the recovered function would have to deviate a long way from the binary function in order for the projection to take an incorrect value on even a single grid point, and the data does not force sufficient deviation. As a result we can actually get perfect recovery on coarse grids. However, if we have a poorly resolved interface we are not well approximating our model and we may get a bad recovery for low signal to noise ratios ($\alpha=0.1$ in Figure \ref{fig:wfac}). 

We do not want to adjust the relationship between $\eps$ and $h$ for different levels of blurring and noise; we want a relationship for each potential that always works. This means we must properly resolve the interfaces. Figure \ref{fig:wfac} suggests that we can take $c_2=5$ for both potentials, however this leads to slightly jagged interfaces. Therefore we will again favour robustness and choose $c_2=8$ i.e. $\pi \eps=8h$.

\begin{figure}
\centering
\includegraphics[width=0.47\textwidth,natwidth=731,natheight=599]{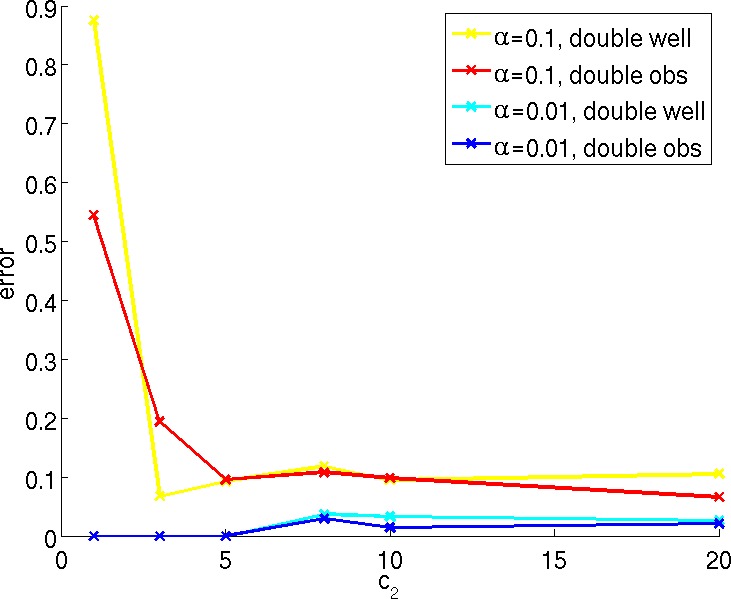}
\caption{Errors (averaged over many realisations of the noise) for both potentials at different levels of blurring and $\gamma=0.2$  with different values of $c_2$.
% $\sigma=\frac{\omega}{80}$ and $\eps=\frac{\omega}{4 \pi}$ (i.e. the optimal values determined in Appendices \ref{sec:sig} and \ref{sec:eps}).
}
\label{fig:wfac}
\end{figure}

\subsection{Choice of iterative parameter}
\label{sec:it}

The discrete iterative methods of Section \ref{sec:discmthds} have values $\bar{\rho}$ independent of $h$ such that for all $\rho>\bar{\rho}$ the iterates decrease in energy and converge in some sense. For example, a possible $\bar{\rho}$ for the iterative method of Example \ref{ex:do22} applied to the problem in Section \ref{sec:speed} is $\max\{ \frac{\sigma_2}{\eps},C_s^2 \}=0.999$, where we use the Poincar\'e constant $1/\pi$. However in practice we observe that the iterates of this method decrease in energy and converge for $\rho \geq 0.833$. It is advantageous to take $\rho$ small, as this results in fewer iterations and uses less total computational effort. So to maximise speed we experimentally determine a value of $\rho$ which is as small as possible while still reliably giving a decrease in energy and convergence of iterates. This approach also works for the iterative method of Remark \ref{rem:dwit} for the double well potential, which lies outside of our framework. So for the speed comparison in Section \ref{sec:speed} we use $\rho = 0.833$ for the smooth double well potential and $\rho = 0.588$ for the double obstacle potential. In the rest of the numerics, where speed is less of a concern, $\rho$ is taken large (and larger than $\bar{\rho}$ if it is known) to ensure we get the expected behaviour of the iterative methods.

\subsection{Choice of stopping criterion}
\label{sec:stop}

We will never quite reach the steady state of the iterative method, so a decision needs to be made about when we are sufficiently close. For this purpose we use the stopping criterion introduced in Section \ref{sec:algo} which terminates the algorithms when the $L^2$ norm of the difference between consecutive iterations is less than TOL. 

Mostly we take TOL small so that we are effectively finding the exact steady state, but for the comparison of speed in Section \ref{sec:speed} we need to avoid unnecessary iterations. Figure \ref{fig:stop} suggests about 170 iterations will take us quite close to the steady state for the problem under consideration. This corresponds to taking TOL=$3e-4$ for the smooth double well and TOL=$3.5e-4$ for the double obstacle, and we use these values for the `accurate recovery'.

In practice we just want a sufficiently accurate recovery as quickly as possible. Our feeling is that the binary function is usually sufficiently accurately recovered once the error is below $0.1$. At this stage the correct number of bars have formed and the locations are probably known well enough (e.g. for a different algorithm to interpret the binary function as a barcode). We see in Figure \ref{fig:stop} that the smooth double well potential achieves this in around 11 iteration, which corresponds to TOL=$1.5e-2$. The double obstacle potential achieves this in around 9 iterations, which corresponds to TOL=$4e-2$. We take these values for the `rough recovery'.

\begin{figure}[ht]
\centering
\subfigure[Error.]{
\includegraphics[width=0.46\textwidth,natwidth=850,natheight=650]{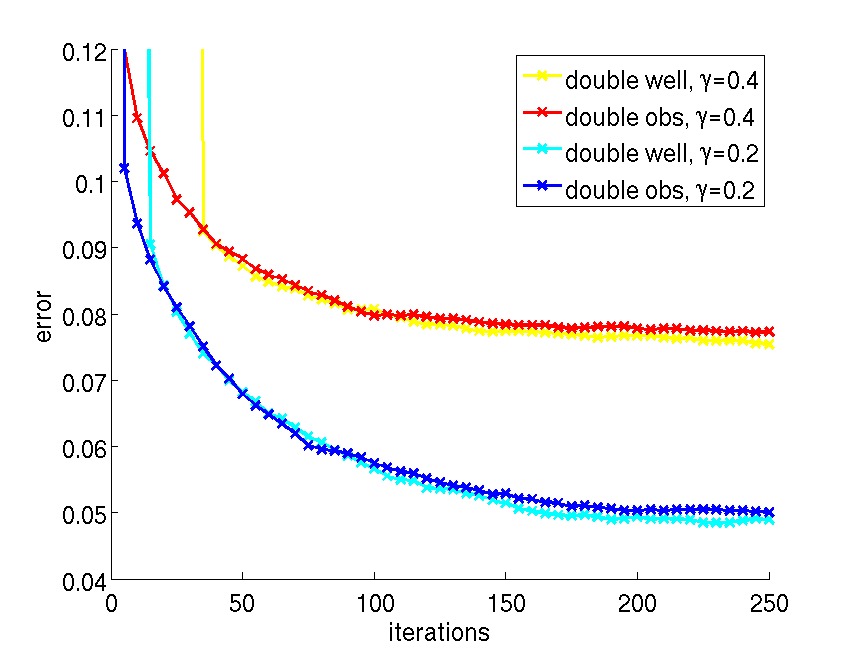}
\label{fig:err}
}
\subfigure[TOL.]{
\includegraphics[width=0.46\textwidth,natwidth=850,natheight=650]{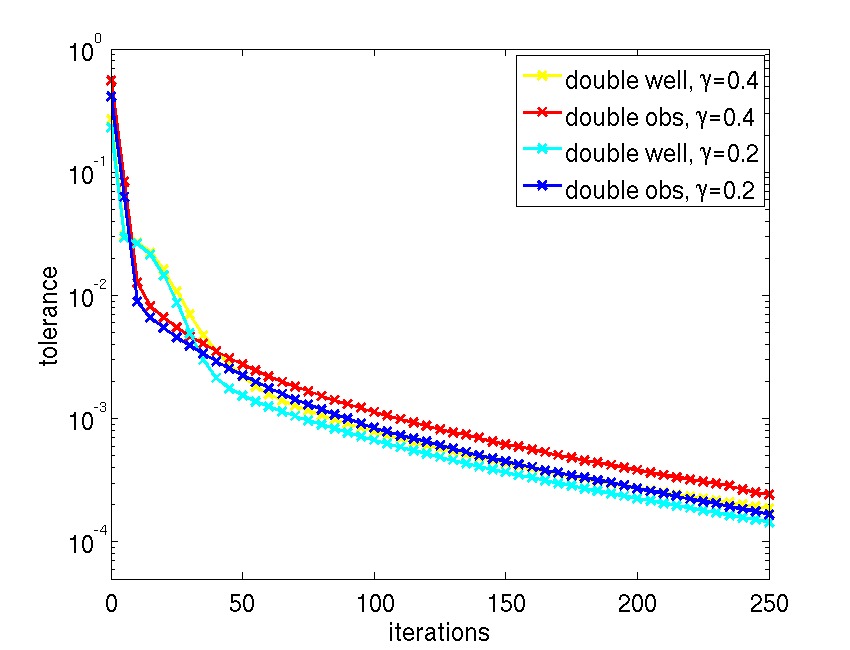}
\label{fig:tol}
}
\caption{The error (averaged over many realisations of the noise) after a given number of iterations for both potentials for the problem of Section \ref{sec:speed}.}
\label{fig:stop}
\end{figure}

\section*{Acknowledgements}
We are grateful to Carsten Gr\"{a}ser for sharing his Dune-Solvers code for the TNNMG method.

\bibliographystyle{alphaabbr}
\bibliography{paper}

\end{document}